\newcommand{\C}{\mathbb C}
\newcommand{\R}{\mathbb R}
\newcommand{\Z}{\mathbb Z}
\newcommand{\Q}{\mathbb Q}
\newcommand{\N}{\mathbb N}
\newcommand{\E}{\mathbb E}
\newcommand{\HH}{\mathbb H}
\newcommand{\F}{\mathbb F}
\newcommand{\slt}{SL(2,\mathbb C)}
\newcommand{\sma}{\left(\begin{array}}
\newcommand{\fma}{\end{array}\right)}
\newtheorem{lem}{Lemma}[section]
\newtheorem{co}[lem]{Corollary}
\newtheorem{thm}[lem]{Theorem}
\newtheorem{prop}[lem]{Proposition}
\newenvironment{proof}{\textbf{Proof.}}{\newline\hspace*{\fill}{$\Box$}\\}
\begin{document}
\title{Groups possessing only indiscrete embeddings in $\slt$}
\author{J.\,O.\,Button\\
Selwyn College\\
University of Cambridge\\
Cambridge CB3 9DQ\\
U.K.\\
\texttt{jb128@dpmms.cam.ac.uk}}
\date{}
\maketitle
\begin{abstract}
We give results on when a finitely generated group has only indiscrete
embeddings in $\slt$, with particular reference to 3-manifold groups. For
instance if we glue two copies of the figure 8 knot along its torus    
boundary then the fundamental group of the resulting closed 3-manifold
sometimes embeds in $\slt$ and sometimes does not,
depending on the identification.
We also give another quick counterexample to Minsky's simple loop question.
\end{abstract}
\section{Introduction}

The group $\slt$ plays an important role in both algebra and geometry.
In particular we obtain on quotienting out by $\{\pm I\}$ the group
$P\slt$ of M\"obius transformations which acts as the group of orientation
preserving isometries on the hyperbolic space $\HH^3$. A discrete subgroup
$\Gamma$ of $P\slt$ is a Kleinian group and these have been much studied 
because the quotient $\Gamma\backslash\HH^3$ 
is a 3-orbifold, and even a 3-manifold
if $\Gamma$ is torsion free,  with a complete hyperbolic metric. However
we can approach this from a group theoretic point of view and ask
what other subgroups of $P\slt$ occur. Certainly one can have abstract
groups which appear both as discrete and indiscrete subgroups of $P\slt$,
such as free groups. However these will not provide new examples of
subgroups of $P\slt$ if we only consider them up to isomorphism so our
focus in this paper will be to look at abstract groups which are subgroups
of $\slt$ but which have no discrete embedding in $\slt$. In this paper
our groups will nearly always be finitely generated and torsion free. In
particular a torsion free subgroup $G$ of $\slt$, or even one without elements
of order 2, will also embed in $P\slt$ because it misses the kernel
$\{\pm I\}$. Moreover if this abstract group $G$ with no elements of order 2
could be realised in some way as a discrete subgroup of $P\slt$ then  
\cite{cul} shows that $G$ lifts to $\slt$, whereupon it will also
be a discrete subgroup of $\slt$. Therefore there is no harm in sticking
to $\slt$ throughout which allows us to work directly with 2 by 2
matrices.

Of course there are many finitely generated, torsion free 
groups which do not embed in $\slt$, either as a
discrete or indiscrete subgroup. Arguably the first big restriction on
subgroups of $\slt$ which comes to mind is that of being commutative
transitive: a group $G$ is commutative transitive (CT) if the relation of
two elements commuting is an equivalence relation on $G-\{e\}$. It is 
straightforward to show
and well known that a torsion free subgroup of $\slt$
(again this generalises to not containing an element of order 2) will be
commutative transitive. Therefore all our examples in this paper must
come from this class. A strictly stronger condition that will also be
of use to us is that of a CSA group, meaning that all centralisers are
malnormal. We introduce basic properties of CT and CSA groups in Section 2,
including a characterisation of the CSA subgroups of $\slt$ in Corollary
2.3.

In section 3 we start by giving standard examples of subgroups of
$\slt$ which do not have a discrete embedding. It goes back to Nisnevi\u{c}
in 1940 that (on avoiding the element $-I$) a free product of subgroups
of $\slt$ also embeds in $\slt$. Therefore we can give examples in Lemma 3.1
of subgroups of $\slt$ which are word hyperbolic and are the fundamental
group of a closed orientable 3-manifold, but which have no discrete
embedding. This result of Nisnevi\u{c} was rediscovered by Shalen in 
\cite{sha} where he also gave conditions under which a free product
with abelian amalgamation of two subgroups of $\slt$ also embeds in
$\slt$. We use and adapt this result to give examples
of subgroups of $\slt$ with no discrete embedding which are
word hyperbolic but which this time cannot be the fundamental group of
any 3-manifold.

It is known that limit groups (finitely generated groups which are fully
residually free) embed in $\slt$ so any limit group which is not the
fundamental group of a 3-manifold (say one that contains $\Z^4$) will
also only have indiscrete embeddings. In Section 4 we show that our
examples are not limit groups using \cite{wlt} which examined which
3-manifolds can have a fundamental group that is a limit group. A
cyclically pinched group is one formed by amalgamating two free groups
over a non identity
element which is not part of a free basis for either free group.
If this element is not a proper power in either factor then the
resulting group embeds in $\slt$. We can therefore ask whether it is always
a limit group. Although it seems to be a folklore result that the answer is
no, we have never seen an example written down. Here we demonstrate that
any cyclically pinched group formed by amalgamating a commutator on one
side with a product of two proper powers of distinct commutators on the other
side is not a limit group if the powers differ by at least 3. The proof is
short and uses standard facts about stable commutator length, including the
lower bound for stable commutator length in a free group.

In Section 5 we look at which closed 3-manifolds, other than orientable
hyperbolic 3-manifolds of course, have fundamental groups which embed in
$\slt$. Although we do not give a full list, we suggest that it is a
somewhat
larger list than might be first thought. In Theorem 5.1 we give the
complete list of torus bundles fibred over the circle with such fundamental
groups: as well as the trivial bundle it is precisely those with Sol
geometry. Given the constructions in Section 3
of groups embedding in $\slt$ using amalgamation over abelian subgroups,
one might suppose that the fundamental group of
a closed orientable 3-manifold admitting a JSJ
decomposition along tori with all pieces hyperbolic might embed. However
we show in Theorem 5.2 that if we glue two copies of the figure 8 knot along
each boundary torus such that the meridians are identified then the  
fundamental group of the resulting 3-manifold does not embed in $\slt$
unless the longitudes are also glued to each other. In this latter case
the group does embed, giving a genus 2 surface bundle fibred over the
circle with fundamental group embedding in $\slt$ even though the
homeomorphism is not pseudo-Anosov. This work utilises a result of
Whittemore from 1973 giving all representations of the figure 8 knot
group in $\slt$.

Having looked at free products with amalgamation, we discuss in Section 6
whether
an HNN extension $\Gamma$ over $\Z$ of a subgroup $G$
of $\slt$ also embeds in $\slt$.
This case seems much less clear and we content ourselves with remarking
that if the stable letter of $\Gamma$
conjugates the generator of $\Z$ to its inverse then the 
answer is no, whereas if $G-\{I\}$ has no elements of trace $\pm 2$ and
the stable letter conjugates the generator to itself then the answer is yes.
However the later case is enough to show quickly and directly that all
limit groups embed in $\slt$ by taking iteratively a free extension of
centralisers.
We also give an example of a group admitting a graph of groups decomposition
with non abelian free vertex groups and maximal cyclic edge groups which
is word hyperbolic but which does not embed in $\slt$. This is in contrast
to when the graph of groups is a tree where we do embed.

Finally we give a concrete counterexample with a brief proof
to Minsky's simple loop question in \cite{min} for genus 2. This is
an example where the fundamental group of the closed orientable surface
of genus 2 has a non injective homomorphism to $\slt$ but such that
the kernel contains no elements represented by simple closed curves. 
We are able to obtain this example quickly by using our results on
embeddings of groups in $\slt$ from Section 3 and the embeddings of the
fundamental group of the figure 8 knot used in Section 5. This can be
seen as one in a sequence of successively shorter counterexamples to
this question, as in \cite{cm}, \cite{ll} and \cite{clsl}.      

\section{Commutative Transitive groups}

We say that
a group $G$ is {\bf commutative transitive} or {\bf CT} for short if the
relation of two elements commuting is transitive on $G-\{e\}$. This is
the same as saying that the centralisers of all (non identity) elements   
are abelian and such groups are also called CA (standing for
centraliser abelian) groups. The finite CT groups are known but there are
many interesting examples of infinite CT groups. For instance Corollary 1
of \cite{ata} states that the free
Burnside groups of sufficiently large odd period are also CT groups.
Here we
are mainly interested in torsion free groups. An application of Zorn's
Lemma tells us that in any group $G$ every element of $G$ is contained in
a maximal abelian group. This can also be used to show that a group $G$ is
CT if and only if every non identity 
element $g$ of $G$ is contained in a unique maximal abelian subgroup, in which
case the centralisers (except $G_G(e)$ if $G$ is non abelian) are precisely
the maximal abelian subgroups. Examples of CT groups are
free groups, limit groups and torsion free word hyperbolic groups.
Moreover it is straightforward to see that any subgroup of $\slt$
not containing $-I$ is CT because in this group a diagonal matrix not equal to 
$\pm I$ can only commute
with a diagonal matrix whereas a matrix of the form $\pm \sma {rr}1&x\\0&1\fma$
can only commute with one of the same form (if $x\neq 0$) 
but every matrix is conjugate in $\slt$ to one of these.

Of course many groups are not CT but here is a quick and easy source of
examples.
\begin{lem}
If $G$ is torsion free and CT but there exists a non identity $x\in G$
whose centraliser $C_G(x)$ has finite index in $G$ then $G$ is abelian.
Thus if $G$ is virtually nilpotent, torsion free and CT then $G$ must
be abelian.
\end{lem}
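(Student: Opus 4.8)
The plan is to prove the first statement directly and then derive the second from it. Write $A = C_G(x)$. Since $G$ is CT and $x \neq e$, the centraliser $A$ is abelian, and by hypothesis it has finite index, say $n = [G:A]$, in $G$. The strategy is to show first that $A$ is central and then to apply commutative transitivity once more to conclude that $G$ itself is abelian.

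First I would observe that every element of $G$ has a nontrivial power lying in $A$. For any $g \in G$ the $n+1$ cosets $A, gA, \ldots, g^n A$ cannot all be distinct, so $g^i A = g^j A$ for some $0 \le i < j \le n$ and hence $g^k \in A$ with $1 \le k = j-i \le n$. Moreover, if $g \neq e$ then $g^k \neq e$, precisely because $G$ is torsion free. The key step is then to feed this into the CT hypothesis. Fix $g \neq e$ and let $a \in A$ be arbitrary. The element $g^k$ lies in the abelian group $A$, hence commutes with $a$; it also commutes with $g$; and it is a non-identity element. Commutative transitivity, applied with $g^k$ as the common commuting partner, therefore forces $g$ to commute with $a$. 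As $g$ and $a$ were arbitrary, this shows $A \subseteq Z(G)$.

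To finish the first part, note that $A$ contains the infinite cyclic group $\langle x \rangle$ and so has a non-identity element; pick any such $a \in A$. Since $a$ is central, any two elements $g, h \in G$ both commute with $a \neq e$, and a second application of CT (now with $a$ as the common partner) yields $[g,h] = e$. Hence $G$ is abelian, and I would note in passing that this route sidesteps any appeal to Schur's theorem on $G/Z(G)$ finite. For the second assertion, let $N$ be a nilpotent subgroup of finite index. If $G$ is finite it is trivial (being torsion free), hence abelian; otherwise $N$ is infinite and in particular nontrivial, so its centre contains some $z \neq e$. Then $N \le C_G(z)$, so $C_G(z)$ has finite index in $G$, and the first part applies to give that $G$ is abelian.

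The only place demanding care is the double use of commutative transitivity: at each application one must check that the common commuting partner ($g^k$, and then $a$) is genuinely non-identity, which is exactly where torsion-freeness is used. I expect this, rather than any computation, to be the crux of the argument; everything else is routine bookkeeping with cosets and centralisers.
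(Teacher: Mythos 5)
Your proof is correct and follows essentially the same route as the paper: find a non-trivial power of each element inside the centraliser (using finite index plus torsion-freeness) and apply commutative transitivity with that power as the intermediate element, then handle the virtually nilpotent case via the non-trivial centre of the nilpotent finite-index subgroup. The only difference is cosmetic — you first show $C_G(x)\le Z(G)$ and then apply CT again, whereas the paper concludes $C_G(x)=G$ directly — but the underlying argument is identical.
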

\begin{proof} For any $g\in G$ there is $n\in \N$ with $e\neq g^n\in C_G(x)$,
so CT implies that $C_G(x)=G$ and hence all elements of $G$ commute with
each other. The last part follows because if $G$ has a finite index subgroup
$H$ which is nilpotent then $H$ has a non trivial centre, thus we have an
element $h\neq e$ with $H=C_H(h)\le C_G(h)$ having finite index in $G$.
\end{proof}

There is a condition that is stronger than being
CT but which is sometimes useful. We say that a subgroup $H$ of a group $G$ 
is {\bf malnormal} (or conjugate separated)
in $G$ if $gHg^{-1}\cap H=\{e\}$ for all $g\in G-H$. A group $G$ is then called
{\bf CSA} (standing for conjugate separated abelian) if every maximal
abelian subgroup is malnormal. This implies CT because we can take a 
non identity element $g\in G$ and a maximal abelian subgroup $M$ containing 
$g$. Then if $x$ is an element of $C_G(g)$ we have $g\in xMx^{-1}\cap M$, 
so $x$ is in $M$ too. In fact an equivalent definition of a CSA group
is that all centralisers are malnormal: if the latter holds then on taking
$x\in C_G(g)$ and any $y$ that commutes with $x$, we obtain $x\in 
yC_G(g)y^{-1}\cap C_G(g)$. Hence malnormality of $C_G(g)$ implies that
$y$ commutes with $g$ so here the centralisers are again 
the maximal abelian subgroups.

Again free groups, limit groups and torsion free
word hyperbolic groups are CSA, and both the CSA and the CT properties
are preserved under taking subgroups, 
but now the situation for subgroups of
$\slt$ not containing $-I$ is a little different.
\begin{lem} If $G$ is a group contained in $\slt$ with $-I\notin G$
and $D$ is the subgroup of diagonal elements in $G$ then $D$ is malnormal in
$G$.
\end{lem}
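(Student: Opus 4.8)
The plan is to verify malnormality directly from the definition: I must show that for every $g\in G-D$ we have $gDg^{-1}\cap D=\{I\}$. So suppose instead that some non-identity $d\in D$ satisfies $gdg^{-1}\in D$ for some $g\in G-D$; the goal is to reach a contradiction. Writing $d=\sma{cc}\lambda&0\\0&\lambda^{-1}\fma$, the standing hypothesis $-I\notin G$ together with $d\neq I$ forces $\lambda\neq\pm1$. Hence $d$ is \emph{regular}, having two distinct eigenvalues $\lambda,\lambda^{-1}$, and its two eigenlines are precisely the coordinate axes spanned by the standard basis vectors $e_1,e_2$.

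Next I would use that conjugation preserves the eigenvalues and exploit their distinctness. By assumption $gdg^{-1}$ is diagonal with the same eigenvalues $\lambda\neq\lambda^{-1}$, so its eigenlines are again the two coordinate axes. On the other hand, from $de_1=\lambda e_1$ we get $(gdg^{-1})(ge_1)=\lambda(ge_1)$, so $g\langle e_1\rangle$ is the $\lambda$-eigenline of $gdg^{-1}$, and similarly $g\langle e_2\rangle$ is its $\lambda^{-1}$-eigenline. Therefore $g$ permutes the pair of coordinate axes. Only two cases arise: either $g$ fixes each axis, whence $g$ is diagonal and so $g\in D$, contradicting $g\notin D$; or $g$ interchanges the axes, whence $g$ is anti-diagonal, $g=\sma{cc}0&b\\c&0\fma$.

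The final step eliminates the anti-diagonal case, and this is exactly where the hypothesis $-I\notin G$ is decisive. Any anti-diagonal element of $\slt$ has $-bc=\det g=1$, i.e. $bc=-1$, so a one-line computation gives $g^2=\sma{cc}bc&0\\0&bc\fma=-I$. But $g\in G$ implies $g^2\in G$, contradicting $-I\notin G$. Thus the anti-diagonal case cannot occur, the diagonal case is the only survivor, and malnormality of $D$ follows.

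I do not expect a serious obstacle; the single point needing care is the reduction to a regular diagonal element $d$, since it is the distinctness of the two eigenvalues that pins down the eigenlines and forces $g$ merely to permute them rather than move them arbitrarily. Conceptually the lemma records that the normalizer of the diagonal torus in $\slt$ is the torus together with its Weyl coset, every element of which squares to $-I$; removing $-I$ from $G$ therefore collapses this normalizer back onto the torus, and that collapse is precisely the statement of malnormality.
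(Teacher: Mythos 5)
Your proof is correct and is essentially the paper's argument in different clothing: the paper compares entries directly in $gd_1=d_2g$ to conclude that $g$ must be diagonal or anti-diagonal unless $d_1=\pm I$, while you reach the same dichotomy by noting that $g$ must permute the two eigenlines of a regular diagonal element, and both proofs kill the anti-diagonal case via $g^2=-I\notin G$. No gaps.
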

\begin{proof}
Suppose that
\[g=\sma{cc} \alpha&\beta\\\gamma&\delta\fma\in G
\mbox{ and }d_1=\sma{cl}\lambda_1&0\\0&\lambda_1^{-1}\fma,
d_2=\sma{cl}\lambda_2&0\\0&\lambda_2^{-1}\fma\in D\]
with $gd_1=d_2g$ holding in $G$.
An easy check shows that if  
$\lambda_1\neq\lambda_2$ then $\alpha=\delta=0$, so $g$ is
a matrix of trace 0 and determinant 1 implying that
$g^2=-I\in G$, and if $\lambda_1\lambda_2\neq 1$ then
$\beta=\gamma=0$ so $g\in D$. This only leaves $d_1=d_2=\pm I$.
\end{proof}
\begin{co}
If $G$ is a subgroup of $\slt$ such that the only element of $G$ with trace
in $\{-2,2\}$ is $I$ then $G$ is a CSA group.

More generally a non abelian subgroup $G$ of $\slt$ is CSA if and only if
it does not contain $-I$ and for any non identity element $g$ with 
trace$(g)=\pm 2$ and $\gamma_g\in\slt$ such that $\gamma_gg\gamma_g^{-1}=
\pm\sma{cc}1&x\\0&1\fma$, the conjugate group $\gamma_gG\gamma_g^{-1}$
contains no elements with bottom left hand entry 0 other than those
with trace $\pm 2$. 
\end{co}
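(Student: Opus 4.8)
The plan is to use the characterisation recorded just above, that a group is CSA exactly when the centraliser of every non-identity element is malnormal, together with the observation that once $-I\notin G$ the group is CT, so these centralisers are precisely the maximal abelian subgroups. I would organise everything around the trace of a non-identity $g\in G$, since in $\slt$ this controls the conjugacy type: if $\mathrm{tr}(g)\neq\pm2$ then $g$ is diagonalisable over $\C$ with distinct eigenvalues, while if $\mathrm{tr}(g)=\pm2$ and $g\neq\pm I$ then $g$ is parabolic. Because malnormality of $C_G(g)$ is unchanged under replacing $G$ by a conjugate $\gamma G\gamma^{-1}$, and the hypotheses $-I\notin G$ and condition (b) are conjugation invariant (trace being a class function), I may always conjugate $g$ into a convenient normal form.

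First I would dispose of the diagonalisable elements, which turn out never to obstruct CSA. Conjugating so that $g$ is a non-central diagonal matrix, a direct check shows $C_{\slt}(g)$ is exactly the diagonal subgroup, hence $C_{\gamma G\gamma^{-1}}(\gamma g\gamma^{-1})$ is the diagonal subgroup $D$ of $\gamma G\gamma^{-1}$, which is malnormal by Lemma 2.2 (this needs only $-I\notin G$). Transporting malnormality back along $\gamma$ settles this case. In particular, if no non-identity element of $G$ has trace $\pm2$ then every centraliser is of this type, so $G$ is CSA; this already gives the first assertion, after noting that $-I$ has trace $-2$, so the hypothesis forces $-I\notin G$.

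The substance is the parabolic case, and this is where I expect the real work to lie. After conjugating by $\gamma_g$ I may assume $g'=\gamma_g g\gamma_g^{-1}=\pm\sma{cc}1&x\\0&1\fma$, and a computation identifies $C_{\slt}(g')$ with the group $P$ of matrices $\pm\sma{cc}1&t\\0&1\fma$; thus $C_{G'}(g')=G'\cap P$, where $G'=\gamma_gG\gamma_g^{-1}$. The key point is that malnormality of $G'\cap P$ fails \emph{precisely} when $G'$ contains an upper triangular element $h=\sma{cc}a&b\\0&a^{-1}\fma$ with $a\neq\pm1$, that is, with bottom left entry $0$ and trace $\neq\pm2$, which is exactly the content of condition (b). To prove this equivalence I would argue both ways. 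For the forward half, the computation $h\,\sma{cc}1&s\\0&1\fma\,h^{-1}=\sma{cc}1&a^2s\\0&1\fma$ shows that such an $h$ conjugates the non-identity element $g'\in G'\cap P$ to another non-identity element of $G'\cap P$ while $h\notin G'\cap P$, breaking malnormality; so if $G$ is CSA no such $h$ can exist. For the converse, suppose $h\in G'\setminus(G'\cap P)$ conjugates some non-identity $p\in G'\cap P$ into $G'\cap P$; since $p$ is parabolic it has a unique eigendirection $\sma{c}1\\0\fma$, and $hph^{-1}\in P$ forces its eigendirection $h\sma{c}1\\0\fma$ to be proportional to $\sma{c}1\\0\fma$, whence the bottom left entry of $h$ vanishes. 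Then $h$ is upper triangular and, as $h\notin P$, has trace $\neq\pm2$, contradicting (b). Hence under (a) and (b) every parabolic centraliser is malnormal too, and combining with the diagonalisable case gives CSA.

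Finally I would tie off the forward implication of the biconditional: a non-abelian CSA group cannot contain the central involution $-I$, since CT fails as soon as a non-identity central element exists (any two non-commuting elements would both commute with it), giving (a); condition (b) is then the contrapositive of the parabolic analysis above. The main obstacle, as indicated, is the parabolic step, and specifically pinning down that the only mechanism by which a parabolic centraliser can fail to be malnormal is the presence of a diagonalisable element sharing its fixed direction, which is what makes condition (b) both necessary and sufficient.
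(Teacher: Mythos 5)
Your proposal is correct and follows essentially the same route as the paper: reduce to malnormality of centralisers, dispose of the non-parabolic case by conjugating to a diagonal matrix and invoking Lemma 2.2, and then show that condition (b) is exactly what makes the parabolic centralisers $G'\cap P$ malnormal. The only difference is one of completeness: you spell out the eigendirection argument and the ``only if'' direction explicitly, which the paper leaves as a one-line assertion.
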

\begin{proof}
On being given an element $g$ of $G$ with trace not equal to 
$\pm 2$ we can assume by conjugation that $g$ is a diagonal matrix not 
equal to $\pm I$. Clearly the centraliser $C_G(g)$ is equal to the abelian
subgroup of diagonal elements in $G$ and so is malnormal by Lemma 2.2,
as $-I$ is not in any conjugate of $G$.
This completes the first case.

Now suppose (conjugating if necessary) that $g=\pm\sma{cc}1&x\\0&1\fma$
for $x\neq 0$. Then $C_G(g)$ 
is also an abelian subgroup, consisting of all matrices in $G$ with trace
$\pm 2$ and bottom left hand entry 0. However an element of $\slt$ can
only conjugate one (non identity) element of this form into another if its
bottom left hand entry is 0 as well. If this element is in $G$ then it lies
in $C_G(g)$ if and only if its trace is $\pm 2$, by the given condition.    
\end{proof}

For $m\neq 0$ the Baumslag-Solitar group $BS(1,m)$ is given by the
presentation $\langle x,t|txt^{-1}=x^m\rangle$. These groups embed
(always indiscretely) in $\slt$ by taking 
\[t=\sma{rr} \sqrt{m}&0\\0&1/\sqrt{m}\fma\mbox{ and }
x=\sma{cc}1&1\\0&1\fma\]
unless $m=\pm 1$ in which case the matrix given for $t$ has finite order.
If $m=1$ then we have $\Z\times\Z$ which clearly embeds 
(both discretely and indiscretely) in $\slt$.
However $BS(1,-1)$ (the Klein bottle group) does not embed in $\slt$
because it is not CT (the group is torsion free, non abelian and with
$t^2$ in the centre so Lemma 2.1 applies). Thus we see that $BS(1,m)$
is CT if and only if $m\neq -1$. This is in disagreement with \cite{gkm}
where it is stated in Theorem 9 that $BS(1,m)$ is not CT for $m\neq 1$
(but no further comment is made as to why). However it is shown there that
$BS(1,m)$ is CSA only for $m=1$, as otherwise the group is non abelian but
has a non trivial normal abelian subgroup.        

There is a general case where the CSA and the CT condition are 
equivalent.
This occurs when $G$ is a group where the centraliser of each non trivial
element is infinite cyclic. Clearly $G$ is CT and it is not hard to show
that $G$ is also CSA, for instance see \cite{sha} Lemma 3.3.
We say that an element $g$ of infinite order in a group $G$
is {\bf primitive} if it generates its own centraliser.
We finish with some lemmas on the CSA property which will be needed later.
\begin{lem}
Let $G_1$ and $G_2$ be groups where the centraliser of every non trivial
element is infinite cyclic, and let $a$ be a primitive element of $G_1$ and
$b$ of $G_2$. 
Then any primitive element of $G_1$ or
$G_2$ is also primitive in the amalgamated free product
$G=G_1*_{a=b}G_2$.
\end{lem}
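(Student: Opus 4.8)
The plan is to use the action of $G=G_1*_{a=b}G_2$ on its Bass--Serre tree $T$: the vertices are the cosets $gG_1$ and $gG_2$, the edges are the cosets $gC$ where $C=\langle a\rangle=\langle b\rangle$ is the infinite cyclic amalgamated subgroup, and the edge and vertex stabilisers are the corresponding conjugates of $C$, $G_1$ and $G_2$. By symmetry it is enough to treat a primitive element $x$ of $G_1$, and I write $v_1$ for the vertex fixed by $G_1$. Since $x$ fixes $v_1$ it is elliptic, so $\mathrm{Fix}(x)$ is a subtree containing $v_1$; the whole argument consists of showing that $\mathrm{Fix}(x)$ is either the single vertex $v_1$ or a single edge, and then reading off $C_G(x)$ from the appropriate stabiliser, using that anything commuting with $x$ must preserve $\mathrm{Fix}(x)$. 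I also record that, by the fact quoted above from \cite{sha}, $G_1$ and $G_2$ are CSA, so the centralisers $\langle a\rangle$ and $\langle b\rangle$ are malnormal in $G_1$ and $G_2$.

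An edge $g_1C$ at $v_1$ (with $g_1\in G_1$) is fixed by $x$ exactly when $g_1^{-1}xg_1\in C$, that is, when $x$ is $G_1$-conjugate into $C$. So the first case is that $x$ is not conjugate in $G_1$ into $C$: then $x$ fixes no edge at $v_1$, forcing $\mathrm{Fix}(x)=\{v_1\}$, so $C_G(x)\le\mathrm{Stab}(v_1)=G_1$ and primitivity of $x$ in $G_1$ gives $C_G(x)=C_{G_1}(x)=\langle x\rangle$.

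In the remaining case $x$ is conjugate in $G_1$ into $C$. Since primitivity is invariant under conjugation in $G$ and $G_1\le G$, I may assume $x\in C$; writing $x=a^k$ and using $a\in C_{G_1}(x)=\langle x\rangle$ gives $k=\pm1$, so $\langle x\rangle=C$ and it remains to prove that $a$ is primitive in $G$. Malnormality now enters: if $g_1^{-1}ag_1\in C\setminus\{e\}$ for some $g_1\in G_1$ then malnormality of $C$ in $G_1$ gives $g_1\in C$, and similarly in $G_2$. Hence the only edge at either endpoint of the base edge $e_0=C$ fixed by $a$ is $e_0$ itself, so $\mathrm{Fix}(a)$ is the closed edge $\overline{e_0}$. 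The two endpoints of $e_0$ lie in different $G$-orbits and so cannot be interchanged, so every element commuting with $a$ fixes $e_0$, giving $C_G(a)\le\mathrm{Stab}(e_0)=C=\langle a\rangle$, with equality.

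I expect this last case to be the crux. The worry is precisely that an element of the amalgamating subgroup, which fixes the edge $e_0$ and therefore lies in both vertex groups, might pick up extra centralising elements straddling $G_1$ and $G_2$; the malnormality (CSA) hypothesis on the two factors is exactly what rules this out, and computing $\mathrm{Fix}(a)=\overline{e_0}$ is the decisive step. A purely combinatorial variant, arguing directly with reduced words in the amalgamated product instead of the tree, would run along the same lines, but the tree makes the fixed-point bookkeeping transparent.
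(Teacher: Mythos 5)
Your proof is correct. It is essentially the paper's argument translated into Bass--Serre language: the paper makes the same two-case split (primitive elements of a factor not conjugate into $H=\langle a\rangle$, versus elements of $H$ itself) and invokes the same key fact that the infinite-cyclic-centraliser hypothesis makes $H$ malnormal in both factors, but it verifies everything by direct manipulation of normal forms $g_1g_2\cdots g_r$ (showing $g\gamma g^{-1}$ stays in normal form, hence outside $G_1$, for $g\notin G_1$) rather than by computing $\mathrm{Fix}(x)$ in the tree. The tree formulation buys you slightly cleaner bookkeeping --- in particular the observation that the endpoints of $e_0$ lie in different orbits disposes of the edge-inversion issue that the combinatorial version handles by parity of the normal form --- and your explicit remark that a primitive element of $G_1$ conjugated into $H$ must land on $a^{\pm 1}$ (the $k=\pm 1$ step) is left implicit in the paper. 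The content is the same; neither version needs anything the other lacks.
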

\begin{proof}
First suppose that 
$\gamma$ is in $G_1$ but cannot be conjugated into 
the amalgamated subgroup $H=\langle a\rangle$
by an element of $G$. 
We take $g\in G-G_1$, whereupon we can assume that
$g$ can be expressed as
$g_1g_2\ldots g_r$ for $r\geq 1$ 
and $g_1,g_2,\ldots ,g_r$ coming alternately from $G_1-H$ and $G_2-H$, which
we will refer to as a normal form for $g$. Conversely an element of this 
form is not in $G_1$ or $G_2$ if $r\geq 2$.
Now $g\gamma g^{-1}=g_1\ldots g_r\gamma g_r^{-1}\ldots g_1^{-1}$ 
is in normal form if
$g_r\in G_2-H$ because $\gamma\in G_1-H$. But if $g_r\in G_1-H$ then
$h_r=g_r\gamma g_r^{-1}$ is also in $G_1-H$ as $\gamma$ is not conjugate 
into $H$.
Moreover $r\geq 2$ in this case
so $g\gamma g^{-1}=g_1\ldots g_{r-1}h_rg_{r-1}^{-1}\ldots g_1^{-1}$ 
is in normal form and so is not in $G_1$.

We now need to consider the centraliser of elements in $H$.
If $h\in H-\{e\}$ and
$g$ is as above then 
$g_1\ldots g_{r-1}(g_rhg_r^{-1})g_{r-1}^{-1}\ldots g_1^{-1}$ is again in
normal form, because we have noted that
the condition on the centralisers being cyclic implies that
$H$ is malnormal in both $G_1$ and $G_2$, so
$ghg^{-1}\notin H$.
\end{proof}   

\begin{lem} If $G_1$ and $G_2$ are torsion free CSA groups
and $G=G_1*_HG_2$ where $H$ is a
centraliser in both $G_1$ and $G_2$ then $G$ is CSA.
\end{lem}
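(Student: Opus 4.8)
The plan is to verify the two defining features of a CSA group for $G$: that it is CT (all centralisers abelian) and that every maximal abelian subgroup is malnormal. Since $H$ is a centraliser in the CSA groups $G_1$ and $G_2$, the characterisations recalled in Section~2 give at once that $H$ is abelian and that $H$ is malnormal in each of $G_1$ and $G_2$. The first step is to upgrade this to malnormality of $H$ in $G$ itself: given $g\in G-H$ with $gHg^{-1}\cap H\neq\{e\}$, I would write $g$ in normal form $g_1\cdots g_r$ and run exactly the reduced-word computation from the proof of Lemma~2.4, using malnormality of $H$ in the factors to see that conjugating a nontrivial $h\in H$ yields a reduced word of positive syllable length, hence not in $H$. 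In particular $C_G(h)=H$ for every nontrivial $h\in H$.

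Next I would compute all centralisers by letting $G$ act on its Bass--Serre tree $T$, whose vertex stabilisers are the conjugates of $G_1,G_2$ and whose edge stabilisers are the conjugates of $H$. The crucial consequence of malnormality of $H$ in the factors is that two distinct edges of $T$ sharing a vertex have trivial stabiliser-intersection, so any element fixing two consecutive edges of a line is trivial. For a nontrivial $g\in G$ there are then three cases. If $g$ is conjugate into $H$, then $C_G(g)$ is the corresponding conjugate of $H$. If $g$ is elliptic but not conjugate into $H$, then $g$ fixes a unique vertex $v$, its centraliser lies in $\mathrm{Stab}(v)$, a conjugate of some $G_i$, and coincides with the maximal abelian subgroup of that factor containing $g$. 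If $g$ is hyperbolic with axis $L$, then the pointwise stabiliser of $L$ is trivial by the edge-intersection remark, so $C_G(g)$ injects into the translations of $L$ and is therefore infinite cyclic. In every case $C_G(g)$ is abelian, so $G$ is CT and its maximal abelian subgroups are precisely these centralisers.

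Finally I would establish malnormality of each maximal abelian subgroup. Conjugates of $H$ are malnormal because $H$ is malnormal in $G$. For a subgroup $M$ arising from the elliptic non-$H$ case, every nontrivial element of $M$ has fixed-point set the single vertex $v$; if $x$ conjugates a nontrivial element of $M$ back into $M$ it must fix $v$, whence $x\in\mathrm{Stab}(v)$, a conjugate of $G_i$, and malnormality of $M$ there (from the CSA factor) gives $x\in M$. The hyperbolic case is where I expect the real work. Here the maximal abelian subgroup is the infinite cyclic $C=C_G(g)$ with axis $L$, and I would show directly that the setwise stabiliser $\mathrm{Stab}(L)$ equals $C$. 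Any $x\in\mathrm{Stab}(L)$ acts on $L\cong\R$ either as a translation, in which case the injectivity of the action of $\mathrm{Stab}(L)$ forces $x$ to commute with the generator of $C$ and hence to lie in $C$, or as a reflection, in which case $x^2$ acts trivially on $L$ and so $x^2=e$; torsion-freeness of $G$ then gives $x=e$, which is not a reflection. Thus $\mathrm{Stab}(L)=C$, and since any $x$ with $xCx^{-1}\cap C\neq\{e\}$ must carry $L$ to itself, we get $x\in C$. This reflection-elimination step, which is exactly what rules out an element conjugating a hyperbolic element to its inverse, is the main obstacle and is precisely where the torsion-free hypothesis is indispensable; together with the two elliptic cases it shows every maximal abelian subgroup of $G$ is malnormal, so $G$ is CSA.
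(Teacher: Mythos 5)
Your argument is correct, but it takes a genuinely different and much more self-contained route than the paper. The paper does only one piece of work by hand, namely the malnormality of $H$ in $G$ via the normal-form computation recycled from Lemma 2.4 (a step you also carry out), and then outsources everything else: it either invokes the notion of a 0-step malnormal amalgam from \cite{ks}, or applies Corollary A.8 of \cite{chmp}, which states that the fundamental group of a graph of CSA groups with edge groups maximal abelian in the adjacent vertex groups is CSA provided the Bass--Serre tree is acylindrical; malnormality of $H$ in $G$ is then exactly what bounds the diameters of fixed-point sets. You instead prove the needed consequences directly: you classify all centralisers through the tree action (conjugates of $H$, maximal abelian subgroups of the factors, and infinite cyclic centralisers of hyperbolic elements, the last using that distinct edges at a vertex have trivially intersecting stabilisers) and then verify malnormality case by case. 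What your route buys is transparency about where each hypothesis enters --- in particular your reflection-elimination step in the hyperbolic case is precisely where torsion-freeness is used, something the paper's citation absorbs invisibly --- at the cost of re-proving several standard tree facts (uniqueness of axes, triviality of pointwise stabilisers of lines) that the citation makes unnecessary. One small point you should make explicit in the elliptic non-$H$ case: to know that every nontrivial element of $M$ fixes only the vertex $v$, you need that no nontrivial element of $M$ is conjugate into $H$, which follows because $H$ is maximal abelian in $G_1$ and in a CT group each nontrivial element lies in a unique maximal abelian subgroup; with that observation supplied, the proof is complete.
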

\begin{proof}
This can be proved using the concept of a 0-step malnormal 
amalgamated free product as defined in \cite{ks}, which means that the
amalgamated subgroup is malnormal in both factors. Alternatively we can
apply \cite{chmp} Appendix A which tells us when the fundamental group of
a graph of groups with abelian edge groups is CSA. In particular
Corollary A.8 states that the fundamental group $\pi_1(\Gamma)$ of a graph
of groups $\Gamma$ with CSA vertex groups and edge groups which are
maximal abelian in the neighbouring vertex groups is CSA if the
Bass-Serre tree of $\Gamma$ is acylindrical. This means that there is an
upper bound for the diameter of the fixed point set of any element in
$\pi_1(\Gamma)-\{e\}$. But for $\pi_1(\Gamma)=G_1*_HG_2$ we have that
a non identity element fixing two vertices which are a distance more than
one apart must fix two edges, thus is in $g_1Hg_1^{-1}\cap g_2Hg_2^{-1}$ for
$g_1^{-1}g_2\notin H$. However $H$ is malnormal in both $G_1$ and $G_2$,
thus is malnormal in $G$ (which follows in exactly the same way as Lemma 2.4
as we did not use the fact that centralisers were cyclic, just that the
amalgamated subgroup was malnormal in both factors).
\end{proof}
We give a quick example to show that Lemma 2.5 fails when we replace CSA
with CT. Let $G_1$ be the semidirect product $\Z^2\rtimes_\alpha\Z$ where
no power of $\alpha$ has fixed points. Then it is not hard to show that
$G_1$ is CT (indeed it follows from Theorem 5.1). Moreover the centralisers are
all infinite cyclic with the single exception of $\Z^2$. If we now let
$G_1=G_2$ with $H=\Z^2$ and form $G=G_1*_HG_2$ then $G$ has the
presentation
\[\langle s,t,a,b|sas^{-1}=\alpha(a),sbs^{-1}=\alpha(b),
tat^{-1}=\alpha(a),tbt^{-1}=\alpha(b)\rangle.\]
Now $\langle s,t\rangle$ generates a non abelian free group so $sts^{-1}t^{-1}$
and $s^2t^2s^{-2}t^{-2}$ do not commute, but they both commute with $a$.

Finally we need a version of Lemma 2.4 for HNN extensions.
\begin{lem}
Suppose that $G$ is a group where the centraliser of every non trivial
element is infinite cyclic, and that $a$ and $b$ are primitive elements
of $G$ where no conjugate of $A=\langle a\rangle$ intersects
$B=\langle b\rangle$ apart from in the identity.
Then any primitive element of $G$ 
is also primitive in the HNN extension
$\Gamma=G*_{tat^{-1}=b}$ where $t$ is the stable letter of the HNN extension.
Moreover if two primitive elements of $G$ are conjugate
in $\Gamma$ then either they are conjugate in $G$ or one is conjugate
in $G$ to $a^{\pm 1}$ and the other to $b^{\pm 1}$.
\end{lem}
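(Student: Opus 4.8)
The plan is to work with Britton's Lemma and the normal form for HNN extensions, mirroring the syllable-counting argument used for Lemma 2.4 but now keeping track of the stable-letter syllables. Throughout I would first record the structural facts that the hypotheses supply. Since every non-trivial centraliser in $G$ is infinite cyclic, $G$ is CSA and both $A=\langle a\rangle$ and $B=\langle b\rangle$ are malnormal in $G$; moreover no element of $G$ can invert $a$ or $b$, for if $gag^{-1}=a^{-1}$ then $g^2$ centralises $a$, so $g$ centralises a non-trivial power of $a$ and hence $g\in\langle a\rangle$, giving the contradiction $gag^{-1}=a$ as $G$ is torsion free. Together with the standing assumption that no conjugate of $A$ meets $B$, these are the only inputs I expect to need.

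For primitivity, let $\gamma$ be primitive in $G$ and take $w\in C_\Gamma(\gamma)$ in reduced form $w=g_0t^{\epsilon_1}g_1\cdots t^{\epsilon_n}g_n$. If $n=0$ then $w\in C_G(\gamma)=\langle\gamma\rangle$, so assume $n\geq 1$ and expand $w\gamma w^{-1}=\gamma$. By Britton's Lemma the right-hand side lies in $G$ while the left carries $2n$ occurrences of $t^{\pm1}$, so the innermost syllable $t^{\epsilon_n}(g_n\gamma g_n^{-1})t^{-\epsilon_n}$ must pinch, forcing $g_n\gamma g_n^{-1}\in A$ (if $\epsilon_n=1$) or $\in B$ (if $\epsilon_n=-1$). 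If $\gamma$ is not conjugate in $G$ into $A$ or $B$ this is impossible, so $w\gamma w^{-1}$ is reduced of length $2n\geq 2$ and cannot equal $\gamma$; hence $C_\Gamma(\gamma)\subseteq G$ and $\gamma$ stays primitive. Otherwise $\gamma$ is conjugate in $G$ into $A$ or $B$, and since $\gamma$ is primitive with cyclic centralisers this conjugate must be $a^{\pm1}$ or $b^{\pm1}$; conjugating in $G$ I may take $\gamma=a$. Here I would feed the pinch back into $w$: malnormality of $A$ and the no-inversion fact force each $g_i$ participating in a pinch to lie in the relevant cyclic subgroup, while the hypothesis that conjugates of $A$ avoid $B$ prevents two successive pinches from chaining, so peeling off pinches one at a time collapses $w$ into $\langle a\rangle$.

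For the conjugacy statement I would invoke the conjugacy theorem for HNN extensions (Collins' Lemma). Two length-zero elements $\gamma_1,\gamma_2\in G$ that are conjugate in $\Gamma$ are joined by a finite chain whose consecutive terms are either conjugate in $G$ or related by a single transfer $a^k\leftrightarrow b^k$ across the stable letter. Malnormality of $A$ and $B$ together with the no-intersection hypothesis force every transfer step to start from an element already conjugate in $G$ into $A$ or $B$, hence (by the primitivity analysis above) to $a^{\pm1}$ or $b^{\pm1}$. A chain using at least one transfer then lands $\gamma_1$ conjugate in $G$ to one of $a^{\pm1},b^{\pm1}$ and $\gamma_2$ to the other family, whereas a transfer-free chain gives $\gamma_1$ conjugate to $\gamma_2$ in $G$; this is exactly the stated dichotomy.

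The main obstacle I anticipate is the bookkeeping in the second subcase of the primitivity argument and in bounding the transfer chains: one must show that the no-intersection hypothesis genuinely forbids a reduced word from pinching twice in succession, so that the syllable length drops at each stage rather than merely rearranging. Making this induction watertight, rather than relying on the single innermost-pinch observation that suffices when $\gamma$ avoids $A$ and $B$, is where the real care will be required.
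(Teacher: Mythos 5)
Your proposal is correct and follows essentially the same route as the paper: a Britton's Lemma/normal-form analysis of the innermost pinch, using malnormality of $A$ and $B$ together with the no-intersection hypothesis to show that conjugating a primitive element by a word outside $G$ either stays outside $G$ or, after exactly one pinch (a second pinch being blocked by reducedness of $w$ on one side and the no-intersection hypothesis on the other), lands on a conjugate of $b^{\pm 1}$. The only cosmetic difference is that you outsource the conjugacy dichotomy to Collins' Lemma, whereas the paper reads it off directly from the same pinch computation; both work.
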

\begin{proof}
We again have a normal form where we can write any 
element $\gamma\in 
\Gamma-G$ as $\gamma_0t^{k_1}\ldots t^{k_n}\gamma_n$ for $n\geq 1$,
$\gamma_i\in G$, $k_i\in\Z-\{0\}$ and no appearance of $ta^jt^{-1}$
or $t^{-1}b^jt$ occurs, and conversely an element in such form does
not lie in $G$.
First let us take a primitive element $g\in G$
which is not conjugate into $A$ or $B$. Thus for any $\gamma\in\Gamma-G$
expressed as above, we have that $\gamma_ng\gamma_n^{-1}$ is not in $A$ or
$B$. This implies that $\gamma g\gamma^{-1}$ is in normal form, thus
not in $G$. Now let us
consider a conjugate of $a$ by an element $\gamma$ outside $G$. Again 
on taking $\gamma$
in the form above, we see that $\gamma a\gamma^{-1}$ only fails to be
in normal form if $\gamma_n\in A$ and $k_n>0$, because $A$ is malnormal in
$G$ and we cannot have $\gamma_na\gamma_n^{-1}$ in $B$. Thus we can
replace $t\gamma_na\gamma_n^{-1}t^{-1}$ with $b$. But now this would only
fail to be in normal form if $k_n=1$, $\gamma_{n-1}\in B$ and $k_{n-1}<0$
which would contradict $\gamma$ being in normal form, unless $n=1$ when
$\gamma a\gamma^{-1}$ is conjugate to $b$.
\end{proof}

\section{Amalgamated free products}

Given an abstract group $G$ (always assumed to be countable and usually
finitely generated), we can ask whether $G$ is a linear group. Here we
will be concerned with a special version of this question:
does $G$ embed in $\slt$? We can rule out some
groups by noting that the finite subgroups of $\slt$ are very
restricted (for instance the only element of order 2 is the matrix $-I$)
so any group containing a finite subgroup not in this class
will fail to embed. However our focus in this paper is on torsion free
groups.

Another severe restriction comes because, as mentioned in the last section,
any subgroup of $\slt$ not containing $-I$ is CT. 
If a finitely generated torsion free group $G$ embeds as a discrete
subgroup of $\slt$, and hence a discrete subgroup of $PSL(2,\C)$,
then we know from hyperbolic geometry that  
$G=\pi_1(M)$ for $M$ an orientable 3-manifold with a complete
hyperbolic structure (or hyperbolic 3-manifold for short). Indeed by
topological tameness we now know that $M$ is homeomorphic to the
interior of a compact orientable 3-manifold. 
Conversely if $G=\pi_1(M)$ where $M$ is a compact orientable 3-manifold
with interior having a complete hyperbolic structure then $G$ is
finitely generated, indeed finitely presented, torsion free, and embeds
as a discrete group in $\slt$. Here we wish
to find examples of subgroups of $\slt$ with no discrete embedding.
There certainly are examples even amongst abelian groups: a discrete
torsion free abelian group can only be $\Z$ or $\Z\times\Z$ so that
if we take matrices $M_i= \sma{rr} 1&x_i\\0&1\fma$ where $x_1,\ldots ,x_m$ are
complex numbers which are linearly independent over $\Q$, the group
generated is $\Z^m$ for any $m$. A more interesting example gives
the wreath product of $\Z$ with itself, obtained by taking a diagonal
matrix with transcendental entries and $\sma{rr} 1&1\\0&1\fma$. This group
contains the free abelian group of countably infinite rank, and is
finitely generated but not finitely presented. These groups 
are certainly not word hyperbolic (because they contain $\Z^2$)
and (if they contain $\Z^4$)
cannot be the fundamental group of any (compact or non compact)
3-manifold, hyperbolic or not.
However they all sit in the subgroup of $\slt$ with bottom left hand
entry 0 and so must be metabelian. 

To obtain further examples which do not sit in some restricted subgroup,
we can try combining subgroups of $\slt$ in various ways. Using the
direct product is doomed to failure because $G_1\times G_2$ is not CT
unless both $G_1$ and $G_2$ are abelian (or one is trivial). Therefore
we should consider the free product, where there are known results on
linearity. Here we take a linear group to mean 
a subgroup of $GL(n,k)$ for $k$ a field of characteristic zero. 
Some similar results to those below
also exist for fields of positive characteristic but we will not discuss
them here. 

The free product $G_1*G_2$ of any two linear groups $G_1,G_2$
is linear: indeed it was proved in \cite{nis} back in 1940 that
if $G_1,G_2\leq GL(n,k)$ then $G_1*G_2\leq GL(n+1,k')$ for some other
field $k'$ of characteristic 0 and we can
replace $n+1$ with $n$ if there are no (non trivial) scalars in
$G_1$ or in $G_2$. This result also follows from Theorem 3 in \cite{weh}.
As $\C$ has uncountable transcendence degree over its prime field $\Q$ and 
all groups in this paper are countable, we can take
$k=k'=\C$ here without loss of generality.
Also Theorem 1 in \cite{sha} rediscovered a version of this result,
stating that if $G_1,G_2\leq SL(n,\C)$ with no scalar matrices in
either factor then $G_1*G_2$ embeds into $SL(n,\C)$. Therefore as an
immediate consequence of these results, we have
\begin{lem} 
If we have two countable subgroups $G_1$ and $G_2$ of $\slt$, neither
of which contain $-I$ then the free product $G_1*G_2$ embeds in $\slt$
as well. 
\end{lem}

Consequently an easy way of coming up with word hyperbolic groups
which embed in $\slt$ but with no discrete embedding
is to take $G_1=\pi_1(M_1)$ and $G_2=
\pi_1(M_2)$ where $M_1$ and $M_2$ are closed hyperbolic 3-manifolds. We
then have that $G_1*G_2$ embeds in $\slt$, is word hyperbolic (as the
free product of two word hyperbolic factors) and is the fundamental group
of a closed 3-manifold, the connected sum of $M_1$ and $M_2$. However it
cannot be the fundamental group of a closed hyperbolic 3-manifold or
even embed discretely in $\slt$: 
if $G=G_1*G_2$ is discrete 
then the resulting
quotient 3-manifold $M=\mathbb H^3/G$ must be irreducible, so 
it is determined by its
fundamental group up to homotopy. But the Mayer-Vietoris sequence for the
homology of a free product would give 
$H_3(G_1*G_2;\Z)=H_3(G_1;\Z)\oplus H_3(G_2;\Z)=\Z\oplus \Z$ which cannot
be equal to $H_3(\pi_1(M);\Z)$.

Given that linearity behaves well under free products, an immediate
question is whether this extends to free products with amalgamation
and/or HNN extensions. However there would need to be restrictions on the
factors/base and 
amalgamated/associated subgroups because we might not even have
residual finiteness in general. As we are concentrating here on torsion
free groups, the first candidates for amalgamated subgroups ought
to be those that are infinite cyclic. This is a fruitful pursuit for
$\slt$, as Proposition 1.3 in \cite{sha} shows:
\begin{prop}
Let $G_1*_HG_2$ be a free product amalgamating the subgroup $H_1\leq G_1$
with $H_2\leq G_2$ via the isomorphism $\phi:H_1\rightarrow H_2$.
Suppose that $G_1$ and $G_2$ are both subgroups of $\slt$ such that\\
(1) the matrices $\phi(h)$ and $h$ are the same for all $h\in H_1$,\\
(2) every $h\in H_1$ is a diagonal matrix, and\\
(3) for every $g_1\in G_1-H_1$ the bottom left hand entry is non zero, as
is the top right hand entry for all $g_2\in G_2-H_2$. 
Then $G_1*_HG_2$ can also be embedded in $\slt$. Further, let us say
that a subgroup of $\slt$ has transcendental traces if every non identity
element has a trace which is transcendental over $\Q$. Then if $G_1$ and
$G_2$ have transcendental traces, so does this embedding of $G_1*_HG_2$.  
\end{prop}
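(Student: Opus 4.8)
The plan is to construct an explicit matrix embedding by conjugating $G_2$ by a suitable diagonal matrix while keeping $G_1$ fixed, then using the normal form theorem for amalgamated free products together with a ping-pong argument to verify injectivity. Since by hypothesis $H_1$ and $H_2$ consist of the same diagonal matrices and $\phi$ is the identity on them, the subgroups literally coincide as matrices, so I do not need to match them up — the issue is purely to ensure that the amalgamated free product structure (and no accidental extra relations) survives inside $\slt$. The natural construction is to leave $G_1$ as given and replace $G_2$ by $\delta G_2\delta^{-1}$ where $\delta$ is a diagonal matrix; conjugation by a diagonal matrix preserves the diagonal subgroup $H$ elementwise only if we are careful, so in fact I expect the cleaner route is to introduce a transcendental parameter into the conjugating element and show that for all but countably many choices the map $G_1*_H G_2\to\slt$ extending the inclusions on each factor is injective.

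First I would recall the normal form: every element of $G_1*_H G_2$ can be written as $c_0 c_1\cdots c_k$ with the $c_i$ lying alternately in $G_1\setminus H$ and $G_2\setminus H$ (and possibly a leading or trailing factor in $H$), and this expression is unique up to the usual coset ambiguities. The goal is to show such a product, when $k\ge 1$, cannot equal $\pm I$ in $\slt$. Here hypothesis (3) is the crucial structural input: every element of $G_1\setminus H_1$ has nonzero bottom-left entry and every element of $G_2\setminus H_2$ has nonzero top-right entry. I would set up two disjoint ``attracting'' regions — heuristically, column vectors whose direction is controlled by the bottom-left versus top-right entries — and arrange (after conjugating $G_2$ by a diagonal $\delta$ with a free transcendental entry) that $G_1\setminus H_1$ maps one region into the other and $G_2\setminus H_2$ does the reverse. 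The transcendental parameter guarantees the off-diagonal entries never accidentally vanish or align, giving genuine ping-pong. A nonempty alternating product then sends a test region strictly inside another and so cannot be the identity; this establishes that the matrix group generated is exactly $G_1*_H G_2$.

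The main obstacle I anticipate is the bookkeeping to guarantee that conjugating $G_2$ by $\delta$ does not disturb the amalgamation: I need $\delta$ to centralise $H$ (which it does, being diagonal, since diagonal matrices commute) so that $H_1=H_2$ is preserved as a common subgroup, while simultaneously $\delta$ must move the top-right entries of $G_2\setminus H_2$ into the range where the ping-pong inequalities hold against the fixed $G_1$. Verifying that both constraints can be met at once — and that the bad set of $\delta$ is countable, hence avoidable since $\C$ has uncountable transcendence degree — is the delicate step; the entry-by-entry estimates for the ping-pong inclusions are routine once the parameter is chosen.

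For the transcendental-traces addendum, once the embedding is fixed I would argue that the trace of any nontrivial reduced word is a polynomial (with coefficients in the subfield generated by the entries of $G_1$, $G_2$, and $\delta$) in the chosen transcendental parameter, and that this polynomial is nonconstant for $k\ge 1$. Hence by choosing $\delta$'s entry transcendental over the field generated by all the relevant matrix entries of $G_1$ and $G_2$ — possible, again, because $\C$ has uncountable transcendence degree over $\Q$ while the groups are countable — every such trace becomes transcendental over $\Q$, as does every trace already coming from the transcendental-trace factors $G_1,G_2$ themselves.
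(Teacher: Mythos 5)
Your strategy is the standard one and is essentially Shalen's: the paper itself states this Proposition without proof, citing \cite{sha}, but the same mechanism appears in the paper's proof of Theorem 6.1, where an HNN extension is handled by introducing a diagonal matrix $\mathrm{diag}(x,x^{-1})$ with $x$ transcendental over the field $\F$ generated by the matrix entries and then tracking the highest and lowest powers of $x$ in each entry of a normal-form word. Conjugating $G_2$ by $\delta=\mathrm{diag}(t,t^{-1})$ does fix the diagonal subgroup $H$ elementwise and multiplies the top right hand entries of $G_2-H_2$ by $t^2$, so your setup is correct.

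The step you should not leave as ``routine'' is the ping-pong, because the metric version you describe --- attracting regions of column-vector directions, verified by entry-by-entry estimates --- cannot be made to work with a single numerical choice of $\delta$: the factors are infinite groups, nothing prevents the nonzero bottom left hand entries of elements of $G_1-H_1$ from accumulating at $0$ while their diagonal entries are unbounded, so there is no uniform inclusion of one region into another valid for every letter simultaneously, and enlarging $|t|$ chases infinitely many conditions at once. The argument that actually closes up is the formal one your parenthetical remarks gesture at: take $t$ transcendental over the countably generated field $\F$ containing all entries of $G_1\cup G_2$, view each entry of an alternating product as a Laurent polynomial in $t$ over $\F$, and prove by induction on the length that the appropriate bottom-row entry has degree exactly $2m$ (with $m$ the number of $G_2$-letters) and leading coefficient equal to a product of the nonzero bottom left hand entries coming from $G_1-H_1$ and top right hand entries coming from $G_2-H_2$, the companion entry having strictly smaller degree. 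A matrix with an entry of positive degree in a transcendental cannot equal $\pm I$, which gives injectivity; this is ping-pong for the degree valuation on $\F(t)$, not for the Euclidean metric on $\C$. It also makes the ``all but countably many $\delta$'' framing unnecessary --- one transcendental $t$ works outright --- and it is exactly what the addendum needs: after cyclically permuting so that a mixed word starts in $G_1$ and ends in $G_2$, its trace is a Laurent polynomial of degree exactly $2m\ge 2$ over $\F\supseteq\Q$, hence transcendental over $\Q$, while elements conjugate into a factor keep their original, by hypothesis transcendental, traces.
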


Earlier Theorem 5 in \cite{weh} proved that if $A$ and $B$ are free groups
with $a$ a primitive element of $A$
and $b$ a primitive element of $B$ then the amalgamated free
product $A*_{a=b}B$ embeds in $\slt$. In fact the proof there is very similar
to that of Proposition 1.3 as given in \cite{sha} by Shalen. However the
Shalen paper recognises that the condition of the factor groups being free
can be weakened considerably. Also in that paper Theorem 2 builds on this 
proposition to create a result on embedding amalgamated free products
in $\slt$ which can be applied recursively:
\begin{thm}
Let $A$ and $B$ be subgroups 
of $\slt$ with transcendental traces such that both groups satisfy the
following property: the centraliser of every
non identity element is infinite cyclic. Then the free product $A*B$,
as well as the amalgamated free product
$A*_{a=b}B$ for any primitive $a\in A$ and $b\in B$
has an embedding into $\slt$ with transcendental traces
and every non identity element has centraliser
which is infinite cyclic.      
\end{thm}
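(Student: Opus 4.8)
The plan is to build the required embedding of $A*_{a=b}B$ directly by producing explicit matrices that satisfy the hypotheses of Proposition 2.10. First I would conjugate $A$ so that the primitive element $a$ becomes a diagonal matrix, and likewise conjugate $B$ so that $b$ becomes the \emph{same} diagonal matrix; since $a$ and $b$ are primitive and of infinite order, after conjugation we may write
\[
a=b=\sma{cc}\lambda&0\\0&\lambda^{-1}\fma
\]
for some $\lambda$, and since the traces are transcendental we know $\lambda$ is not a root of unity. This arranges conditions (1) and (2) of Proposition 2.10 automatically. The substance of the argument is to further conjugate $A$ (by a diagonal matrix, so as not to disturb $a$) and $B$ (by a diagonal matrix fixing $b$) so that condition (3) holds: every $g_1\in A-\langle a\rangle$ must acquire a nonzero bottom-left entry, and every $g_2\in B-\langle b\rangle$ a nonzero top-right entry.

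The key step is therefore an entry-killing avoidance argument. Conjugating $A$ by $\mathrm{diag}(s,s^{-1})$ multiplies the bottom-left entry of each $g_1$ by $s^{-2}$, so the bottom-left entry vanishes for the conjugate precisely when it already vanishes for $g_1$; hence I first need to know that no element of $A-\langle a\rangle$ has bottom-left entry $0$ to begin with. This is where the hypothesis that centralisers are infinite cyclic does the work: an element of $A$ with bottom-left entry $0$ that is not $\pm I$ is upper triangular, and an upper-triangular non-central element either is a nontrivial power of the diagonalisable $a$ (forcing it into $\langle a\rangle$ by primitivity and malnormality, as established in the proof of Lemma 2.4) or is parabolic/unipotent — but a genuinely parabolic element would have a noncyclic or differently-structured centraliser, contradicting the cyclic-centraliser hypothesis together with transcendentality of traces (trace $\pm2$ is excluded). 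Thus every $g_1\in A-\langle a\rangle$ genuinely has nonzero bottom-left entry, and symmetrically for $B$, so condition (3) is met without even needing the extra diagonal conjugations.

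Having verified (1)--(3), Proposition 2.10 gives an embedding of $A*_{a=b}B$ into $\slt$, and its last sentence guarantees that the embedding again has transcendental traces provided $A$ and $B$ do; this is exactly what we assumed. It remains only to check the two inherited structural properties of the embedded group $G=A*_{a=b}B$: that it has transcendental traces (immediate from Proposition 2.10) and that the centraliser of every nontrivial element is again infinite cyclic. For the latter I would invoke Lemma 2.4: it shows that primitive elements of $A$ or $B$ remain primitive in $G$, i.e.\ still generate their own centralisers. Combined with the normal-form analysis in that lemma — which controls centralisers of the amalgamated subgroup $H=\langle a\rangle$ and of elements not conjugate into $H$ — one deduces that every nontrivial element of $G$ is conjugate into a factor or is a ``hyperbolic'' element of the Bass--Serre action whose centraliser is the cyclic group it generates. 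The free product case $A*B$ is the degenerate instance with $H$ trivial and follows identically (indeed more easily, being covered by Lemma 3.1 for the embedding itself).

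The main obstacle I anticipate is the rigorous exclusion of parabolic elements and the bookkeeping needed to confirm that condition (3) holds across \emph{all} non-amalgamated elements simultaneously; once that is cleanly established, the embedding is handed to us by Proposition 2.10 and the centraliser structure is handed to us by Lemma 2.4, so the proof reduces to assembling these two earlier results.
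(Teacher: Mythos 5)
You should first be aware that the paper contains no proof of this statement at all: it is quoted as Theorem~2 of Shalen \cite{sha}, with only the remark that its proof resembles that of Proposition~3.2. The closest thing to a proof in the paper is the argument for the variant Corollary~3.5, and your outline (what you call ``Proposition 2.10'' is Proposition~3.2 in the paper's numbering) follows that argument in broad structure: diagonalise the amalgamated elements, verify condition (3) of Proposition~3.2 by showing a zero entry forces an element into $\langle a\rangle$, then control centralisers via Lemma~2.4 and the action on the Bass--Serre tree.

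There is, however, a genuine gap at the very first step. You cannot ``conjugate $B$ so that $b$ becomes the \emph{same} diagonal matrix'' as $a$: conjugation preserves trace, and there is no reason whatsoever that $\mathrm{tr}(a)=\mathrm{tr}(b)$ for two arbitrary primitive elements of two unrelated subgroups. The indispensable device --- used both in Shalen's proof and in the paper's proof of Corollary~3.5 --- is to diagonalise $a$ as $\mathrm{diag}(\lambda,\lambda^{-1})$ and $b$ as $\mathrm{diag}(\mu,\mu^{-1})$ and then apply entrywise to $B$ a field automorphism $\psi$ of $\C$ with $\psi(\mu)=\lambda$. This is exactly where the transcendental-traces hypothesis earns its keep (it guarantees $\lambda$ and $\mu$ are transcendental, hence that such a $\psi$ exists, and that transcendence of traces is preserved); in your write-up transcendence is only used to exclude roots of unity and parabolics, so its principal role has gone missing. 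A secondary flaw: your dichotomy ``an upper-triangular non-central element is either a power of $a$ or parabolic'' is false --- an upper-triangular matrix with eigenvalues $\neq\pm1$ and nonzero off-diagonal entry is neither. The correct route (Lemma~3.4) is that for such an $x$ the commutator of $a$ with $x$ is unipotent, hence has trace $2$, hence is the identity since no non-identity element has trace $\pm2$; so $x$ commutes with $a$ and lies in $C_A(a)=\langle a\rangle$ by primitivity. With the field automorphism inserted and the zero-entry step replaced by the Lemma~3.4 computation, your outline does become essentially the intended (Shalen-style) proof.
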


Here we would like to adapt these results somewhat to create a version
which does not require
the infinite cyclic restriction to hold for all centralisers as in 
Theorem 3.3.    

\begin{lem}
Suppose that $A$ is a group which embeds in $\slt$ such
that no element has trace $\pm 2$ except the identity. Suppose that 
$a\neq I$
is an element in $A$ of the form 
\[a=\sma{cl}\lambda&0\\0&\lambda^{-1}\fma.\] If any
element $x$ of $A$ has a zero entry then $x$ is also diagonal and so
commutes with $a$. 
\end{lem}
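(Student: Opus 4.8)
The plan is to treat $x$ according to which of its four entries vanishes, exploiting the two standing hypotheses repeatedly: that $-I\notin A$ and more generally that $I$ is the only element of $A$ with trace $\pm2$. I would first record that since $a\neq I$ and $a=\sma{cc}\lambda&0\\0&\lambda^{-1}\fma$ lies in $A$ with trace $\lambda+\lambda^{-1}$, the hypothesis forces $\lambda+\lambda^{-1}\neq\pm2$, i.e.\ $\lambda\neq\pm1$, so that $\lambda^2\neq1$ and $\lambda\neq\lambda^{-1}$. This single inequality drives everything. Writing $x=\sma{cc}p&q\\r&s\fma$ with $ps-qr=1$, a zero entry is either off-diagonal ($q=0$ or $r=0$, so $x$ is triangular) or on the diagonal ($p=0$ or $s=0$).

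For the triangular case I would conjugate $x$ by $a$. If $x$ is upper triangular then $axa^{-1}$ is again upper triangular with the \emph{same} diagonal as $x$ but with its top right entry multiplied by $\lambda^2$; hence the commutator $[a,x]=axa^{-1}x^{-1}$ is unipotent, of the form $\sma{cc}1&\ast\\0&1\fma$ with $\ast=pq(\lambda^2-1)$ (and symmetrically in the lower triangular case, with $\lambda^{-2}$ and bottom left entry $rs(\lambda^{-2}-1)$). This commutator lies in $A$ and has trace $2$, so by hypothesis it equals $I$; since $\lambda^2\neq1$ and $p,s\neq0$ (as $ps=1$), this forces the off-diagonal entry of $x$ to vanish. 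Thus every triangular element of $A$ is in fact diagonal, which disposes of the off-diagonal case and shows such $x$ commutes with $a$.

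The hard part will be the case where a diagonal entry of $x$ vanishes, say $p=0$; then $ps-qr=1$ gives $qr=-1$, so $q,r\neq0$ and $x=\sma{cc}0&q\\r&s\fma$. Here I would instead conjugate $a$ itself by $x$: a short computation shows $xax^{-1}=\sma{cc}\lambda^{-1}&0\\rs(\lambda-\lambda^{-1})&\lambda\fma$, which is lower triangular and lies in $A$. By the triangular case just proved it must be diagonal, and since $r\neq0$ and $\lambda\neq\lambda^{-1}$ this forces $s=0$. But then $x=\sma{cc}0&q\\r&0\fma$ is antidiagonal with $qr=-1$, so $x^2=-I\in A$, contradicting that $I$ is the only element of trace $\pm2$. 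The case $s=0$ is entirely symmetric. Hence no diagonal entry of $x$ can vanish, so any $x\in A$ with a zero entry is triangular and therefore diagonal, commuting with $a$ as claimed.
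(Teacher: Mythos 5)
Your proof is correct and follows essentially the same route as the paper: handle triangular $x$ by showing the commutator $axa^{-1}x^{-1}$ is unipotent of trace $2$ hence trivial, then reduce the zero-diagonal-entry case to the triangular one by conjugating $a$ by $x$, ending with the contradiction $x^2=-I$. The only difference is presentational (you spell out $\lambda\neq\pm1$ and the nonvanishing of the relevant entries slightly more explicitly).
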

\begin{proof}
First suppose that 
\[x=\sma{rc}\alpha&0\\\gamma&1/\alpha\fma\qquad\mbox{ then }\qquad
axa^{-1}x^{-1}=\sma{cr}1&0\\(\lambda^{-2}-1)\gamma/\alpha&1\fma\]
which has trace 2 so $\gamma$ is zero. The same applies for a zero
in the bottom left hand entry. Now suppose that
\[x=\sma{cr}0&\beta\\-1/\beta&\delta\fma\qquad\mbox{ then }\qquad
xax^{-1}=\sma{cr}\lambda^{-1}&0\\(\lambda^{-1}-\lambda)\delta/\beta &\lambda
\fma\]
so we can now apply the above with $x$ replaced by $xax^{-1}$, telling us
that $\delta=0$ and forcing on us an element of trace 0, with square $-I$
in $A$.
\end{proof}

\begin{co}
Suppose that $A$ is a group which embeds in $\slt$ such
that no element has trace $\pm 2$ except the identity. Let $a\in A$
be a primitive
element in $A$ which has a transcendental trace under this embedding.
Let the same conditions hold for the group $B$ and the
element $b\in B$. Then the amalgamated free product $A*_{a=b}B$ can be
embedded in $\slt$ as well.
\end{co}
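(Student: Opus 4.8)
The plan is to verify the three hypotheses of Proposition 3.2 after first arranging that $a$ and $b$ are diagonalised to literally the same matrix. Since $a$ is primitive with transcendental trace, in particular $\mathrm{tr}(a)\neq\pm2$, I would conjugate the given embedding of $A$ so that $a=\sma{cc}\lambda&0\\0&\lambda^{-1}\fma$ with $\lambda\neq\pm1$ and $\lambda+\lambda^{-1}$ transcendental, and likewise conjugate $B$ so that $b=\sma{cc}\mu&0\\0&\mu^{-1}\fma$ with $\mu$ transcendental over $\Q$. Conditions (1) and (2) of Proposition 3.2 will then hold once $\lambda$ and $\mu$ agree, because $\langle a\rangle$ consists of diagonal matrices and the amalgamating isomorphism sends $a\mapsto b$. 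Condition (3) is already in hand: if some $g\in A-\langle a\rangle$ had bottom left entry $0$, then by Lemma 3.4 (applicable since $A$ has no non-identity element of trace $\pm2$) the element $g$ would be diagonal, hence would commute with $a$ and so lie in $C_A(a)=\langle a\rangle$ by primitivity, a contradiction; the same argument with the top right entry handles $B$.

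The main obstacle is that conjugation cannot be used to make $\lambda$ equal $\mu$, since the trace is a conjugacy invariant, so a genuinely field-theoretic move is needed — and this is exactly where transcendentality of the two traces is used. I would re-embed $A$ by a field homomorphism. Let $F\subseteq\C$ be the (countable) subfield generated by all matrix entries of $A$, so that $\lambda\in F$. Because $\lambda$ and $\mu$ are both transcendental over $\Q$ there is an isomorphism $\Q(\lambda)\to\Q(\mu)$ sending $\lambda\mapsto\mu$ and fixing $\Q$. As $\C$ is algebraically closed and has uncountable transcendence degree over $\Q$ while $F$ is countable, a routine transcendence-degree count lets this isomorphism be extended to a field embedding $\sigma\colon F\to\C$ with $\sigma(\lambda)=\mu$: one chooses a transcendence basis of $F$ over $\Q(\lambda)$, sends it to elements of $\C$ algebraically independent over $\Q(\mu)$, and then extends algebraically into the algebraically closed field $\C$.

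Applying $\sigma$ entrywise sends $\slt\cap M_2(F)$ into $\slt$, since it preserves products and the condition $\det=1$, and it is injective on matrices because $\sigma$ is injective on $F$; hence $\sigma$ restricts to an embedding of the abstract group $A$ into $\slt$. In this new embedding $a$ becomes $\sma{cc}\mu&0\\0&\mu^{-1}\fma=b$, as required, and the hypotheses survive: $\sigma$ fixes $\Q$, so $\mathrm{tr}(\sigma(g))=\pm2$ forces $\mathrm{tr}(g)=\pm2$ and hence $g=I$, while primitivity of $a$ is a property of the abstract group. Thus Lemma 3.4 still applies and condition (3) still holds for the re-embedded $A$, whereas conditions (1) and (2) now hold because $a=b$ as diagonal matrices and $\langle a\rangle$ is diagonal. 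Proposition 3.2 then yields the desired embedding of $A*_{a=b}B$ into $\slt$. Note that I would not invoke the transcendental-traces conclusion of Proposition 3.2, since we are only told that $a$ and $b$, rather than all of $A$ and $B$, have transcendental trace; only the embedding statement is needed here.
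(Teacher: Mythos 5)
Your proposal is correct and follows essentially the same route as the paper: diagonalise $a$ and $b$, use the transcendence of the traces to transport one embedding by a field map so that $\lambda$ becomes $\mu$, verify condition (3) of Proposition 3.2 via Lemma 3.4 together with primitivity, and then apply Proposition 3.2. The only cosmetic difference is that the paper invokes a field automorphism of $\C$ with $\psi(\mu)=\lambda$ (citing Shalen's Lemma 3.2) and applies it to $B$, whereas you build an embedding of the countable subfield generated by the entries of $A$ by hand; these are interchangeable.
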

\begin{proof}
We first take conjugates in $\slt$ of $A$ and $B$ such that
\[a=\sma{rc}\lambda&0\\0&\lambda^{-1}\fma\qquad\mbox{ and }\qquad
 b=\sma{rc}\mu&0\\0&\mu^{-1}\fma\]
where $\lambda$ and $\mu$ will both be transcendental numbers. It is well
known (for instance see Lemma 3.2 of \cite{sha}) that
there exists a field automorphism $\psi$ of $\C$ such that
$\psi(\mu)=\lambda$. 
We apply $\psi$ to the entries of all elements of $B$
which will result in a subgroup of $\slt$ abstractly isomorphic to
(and still called) $B$ but such that $a=b$. Moreover this will not affect
which entries or traces of elements in $B$ are transcendental or equal to
$\pm 2$. If the bottom left hand entry of any element $x$ in $A$ is zero
then Lemma 3.4 says that $x$ is in the centraliser of $A$ and so in the
subgroup being amalgamated. The same will hold for all of the top right
hand entries of $B$, so we can now apply Proposition 3.2.
\end{proof}

We have already seen examples of subgroups of $\slt$ which are word hyperbolic
but have no discrete embedding. However they were fundamental groups of 
compact 3-manifolds, so we would now like examples which do not 
have this property. By \cite{bf} due to Bestvina and Feighn
we have that if $A$
and $B$ are torsion free word hyperbolic groups with $a\in A$ and $b\in B$
both non trivial elements then the amalgamated free product
$G=A*_{a=b}B$ is word hyperbolic if and only if $G$ contains no
$\Z\times\Z$ subgroup, which is shown to occur if and only if either 
$\langle a\rangle$ is malnormal in $A$ or $\langle b\rangle$ is malnormal
in $B$. This is also the same as saying that either $a$ or $b$ is
a primitive element in $A$ or $B$ respectively, because $A$ and $B$ are
both word hyperbolic, hence CSA.
(The result in \cite{bf} allows amalgamation over a virtually
cyclic subgroup but this is the statement when restricted to torsion free
groups.) 

Thus a particular example where Theorem 3.3 gives us word hyperbolic
groups embedding in $\slt$ is when $A$ and $B$ are
non abelian free groups.
Moreover we can take free groups $F_{r_1},\ldots ,
F_{r_n}$ for any ranks $r_i$ at least 2 and form repeated amalgamated free
products over arbitrary cyclic subgroups generated by primitive elements.
A well studied construction which is similar to this is that of a graph of
groups with non abelian free vertex groups and infinite cyclic edge groups.
The idea is that we obtain the fundamental group of a graph of groups by 
forming the amalgamated free product where an edge joins two distinct vertices,
then contract this edge and continue until we are left with self loops
which then give us HNN extensions. Consequently if the graph is a tree then
only free product amalgamation occurs. The Bestvina -Feighn result mentioned
above showing word hyperbolicity of such an amalgamated free product where
just one element is primitive does not extend to graphs of groups with this
property on the edge groups, for instance one can take three groups which
are free on $a,b$, on $c,d$ and on $x,y$ respectively, then amalgamating
$a^3$ to $c$ and $c$ to $x^3$ does not result in a word hyperbolic group.
However if we amalgamate elements which are primitive in each factor then
the resulting graph of groups is word hyperbolic by repeated use of
\cite{bf}, and Lemma 2.4 or a similar argument. Therefore by these two results
and Theorem 3.3 we have:
\begin{co}
If $\Gamma$ is a graph of non abelian free groups with infinite cyclic
edge groups which are generated by a primitive element on each side and
$\Gamma$ is a tree then the fundamental group of $\Gamma$ is word hyperbolic
and embeds in $\slt$.
\end{co}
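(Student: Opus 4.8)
The plan is to argue by induction on the number of vertices of the tree $\Gamma$, carrying at every stage the full package of properties demanded by Theorem 3.3: that the group assembled so far is word hyperbolic, embeds in $\slt$ with transcendental traces, and has every non identity centraliser infinite cyclic. For the base case I would take a single non abelian free vertex group $F$. Such a group already has all centralisers infinite cyclic, and it embeds in $\slt$ with transcendental traces: writing $F=\Z*\cdots*\Z$ and starting from a single transcendental diagonal matrix (all of whose powers have transcendental trace, since $\lambda^k+\lambda^{-k}$ algebraic would force $\lambda$ algebraic), repeated application of the free product half of Theorem 3.3 produces the required embedding of $F$.

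For the inductive step I would order the vertices $v_1,\ldots,v_k$ so that each $v_i$ with $i\geq 2$ is joined by a single edge to exactly one earlier vertex — possible precisely because $\Gamma$ is a tree — and set $H_1=G_{v_1}$ together with $H_i=H_{i-1}*_{c_i=a_i}G_{v_i}$, where $a_i$ generates the edge group inside the free group $G_{v_i}$ and $c_i$ is its partner generator, sitting in the earlier vertex group and hence in $H_{i-1}$. To invoke Theorem 3.3 at stage $i$ I need $a_i$ primitive in $G_{v_i}$, which is a hypothesis of the corollary, and $c_i$ primitive in $H_{i-1}$. The latter I would obtain from Lemma 2.4: $c_i$ is primitive in the free vertex group where it originates, and each earlier amalgamation was formed from two groups with all centralisers infinite cyclic by gluing a pair of primitive elements, so Lemma 2.4 guarantees that primitive elements of the vertex groups stay primitive in every $H_j$. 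Granting this, Theorem 3.3 gives that $H_i$ embeds in $\slt$ with transcendental traces and all centralisers infinite cyclic, keeping the inductive package intact for the next stage; taking $i=k$ identifies $H_k$ with the fundamental group of $\Gamma$.

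Word hyperbolicity I would carry along the same induction. At stage $i$ the group $H_{i-1}$ is word hyperbolic by induction and $G_{v_i}$ is free hence word hyperbolic; since $a_i$ is primitive in the free group $G_{v_i}$, the subgroup $\langle a_i\rangle$ is malnormal there, so the Bestvina--Feighn criterion of \cite{bf} shows that $H_i=H_{i-1}*_{c_i=a_i}G_{v_i}$ contains no $\Z\times\Z$ and is therefore word hyperbolic.

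The main obstacle, and the reason the hypotheses insist on primitivity on \emph{both} sides rather than just one, is the bookkeeping of primitivity through the iteration. Word hyperbolicity alone would need only one malnormal side at each amalgamation, but to keep feeding the process I must know that the edge generators destined for later amalgamations are still primitive in the group assembled so far. This is exactly what Lemma 2.4 supplies, and its hypotheses — all centralisers infinite cyclic in both factors, and primitivity of the amalgamated pair — are precisely the invariants that Theorem 3.3 is built to preserve. Checking that these invariants genuinely propagate, so that Lemma 2.4 and Theorem 3.3 stay applicable at every stage, is where the care lies; once that is in place the two cited results combine mechanically to give the statement.
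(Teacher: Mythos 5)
Your proposal is correct and follows essentially the same route as the paper, which also obtains the result by iterating Theorem 3.3 along the tree, using Lemma 2.4 to keep the edge generators primitive at each stage and the Bestvina--Feighn criterion of \cite{bf} for word hyperbolicity. The paper leaves this as a brief sketch in the paragraph preceding the corollary; your write-up simply makes the induction and the propagation of the hypotheses explicit.
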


We note that there exist examples where $\Gamma$ is not the fundamental group
of any 3-manifold and therefore cannot embed discretely in $\slt$. For
instance, as mentioned in the last section of \cite{gdwl}, we can take the
double $\Gamma$ over the primitive word $yx^2y^{-1}x^{-3}\in F_2$. If
$\Gamma=\pi_1(M)$ for $M$ a 3-manifold (assumed compact without loss of
generality) then we can take $M$ to be prime as $\Gamma$ is one ended.
This means here that $M$ is irreducible so the splitting over $\Z$ of
$\pi_1(M)$ can be induced geometrically, giving 3-manifolds $M_1, M_2$,
which are irreducible because $M$ is, 
with fundamental group free of rank 2. Now results in \cite{hem} imply
that $M_1$ and $M_2$ must each be a (possibly non orientable)
cube with one handle, joined to form
$M$ by attaching along a neighbourhood of
the curve $yx^2y^{-1}x^{-3}$ embedded in each
boundary. But we can attach a thickened disc along a
neighbourhood of one of these curves if it is an annulus
as in \cite{gdwl}, or attach a $P^2\times I$ if the neighbourhood is a
M\"obius strip. Thus we obtain a 3-manifold
with fundamental group $\langle x,y| yx^2y^{-1}x^{-3}\rangle$ 
or $\langle x,y| (yx^2y^{-1}x^{-3})^2\rangle$
which is well known to be a contradiction (for instance neither
Alexander polynomial is symmetric).

We consider the case of HNN extensions, where more care is needed
because these can introduce Baumslag-Solitar
subgroups, in Section 6, where we also mention other results on linearity
of these groups.

\section{Non Limit groups}
One definition of a limit group is that it is a finitely generated group
which is fully residually free. It is straightforward to show that 
such a group is torsion free and CT. It is also true that limit groups
embed in $\slt$, for instance see Window 8 of \cite{lubs} and we will provide
a quick and explicit proof in Section 6. Indeed 
\cite{ba67} by B.\,Baumslag shows that a finitely generated
residually free group is either a limit group or contains $F_2\times\Z$.
This last group is clearly not CT so limit groups are exactly the finitely
generated residually free groups which embed in $\slt$. Therefore we
must consider whether the groups given in this paper are limit groups, in
which case existence of an embedding into $\slt$ was already known.
First of all, as the only 
metabelian limit groups are the free abelian groups $\Z^m$, 
examples such as the wreath product given in Section 3 are not limit groups.
Moreover a
finitely generated subgroup of a limit group is also a limit group, but 
it was shown in \cite{wlt} using work of Sela that the fundamental group of
a closed orientable hyperbolic 3-manifold cannot be a limit group. 
Therefore the examples in the last section
which were formed using free products 
contain a finitely generated
subgroup which is not a limit group, meaning that they are not limit groups
either. The word hyperbolic but non 3-manifold group given at the end of the
last section is a limit group as it is a double of a free group,
but we can avoid this and still have a word
hyperbolic non 3-manifold group which embeds in $\slt$ by taking its free
product with a closed hyperbolic 3-manifold group.

Limit groups can contain $\Z^k$ for $k\ge 2$
and so need not be word hyperbolic but a limit group which does not
contain $\Z\times\Z$ is word hyperbolic. A special case of the
graph of free groups with $\Z$ edge groups mentioned in the last section
is the amalgamated free product
$G=F_{r_1}*_{w_1=w_2}F_{r_2}$ for $F_{r_1},F_{r_2}$ non abelian free groups
and $w_1,w_2$ any two non trivial elements. This is called a
{\bf cyclically pinched} group if neither $w_i$ is part of a free basis for
$F_{r_i}$ (otherwise we obtain the free group $F_{r_1+r_2-1}$). If neither
element is primitive, say $w_i=u_i^{n_i}$
for $n_i\ge 2$, then it is well known that $G$ cannot be a limit group.
A quick way to see this is to note that $G$ is not CT because
$u_1u_2u_1^{-1}u_2^{-1}$ is non trivial but $w_i$ commutes with $u_1$ and
$u_2$. However if we let one or both of the $w_i$ be primitive
elements, it is still not known exactly when $G$ is a limit group. 
Some cases are known, for instance doubles (where
$r_1=r_2$ and $w_1=w_2$) have been shown to be limit groups but if only
one element is primitive then examples of non limit groups go back to
Lyndon in 1959. He showed that any solution to the equation $x^2=y^2z^2$
in a free group has the property that $x,y$ and $z$ all commute. This
means that if $G=F_{r_1}*_{u^2=v^2w^2}F_{r_2}$ for $u,v,w$ any non trivial
words then any homomorphism from $G$ onto a free group $F$ sends $u,v,w$
to commuting elements in $F$. Thus if $v$ and $w$ do not commute in $F_{r_2}$
then all homomorphisms from $G$ to a free group send the commutator
$[v,w]$ to the identity. Moreover the same property was shown in \cite{lyscz}
to
hold for the equation $x^l=y^mz^n$ for $l,m,n\ge 2$ so that an element
of the form $y^mz^n$ is not a proper power in a free group if $y$ and $z$
do not commute, and thus $G=F_{r_1}*_{u^l=v^mw^n}F_{r_2}$ is a word
hyperbolic group which is not residually free.

Let us now assume that both $w_1$ and $w_2$ are primitive elements.
If the process of forming repeated
amalgams of free groups over maximal cyclic subgroups always resulted in a   
limit group then Corollary 3.8 would follow immediately. In fact this is 
not true although we are unaware of specific examples in the
literature so will give the first ones here. Our reference on this question is
\cite{sel} where it is mentioned in Part I (3)
that ``if the element $w_1$ is a commutator
in the first free group and $w_2$ is a product of two ``high'' powers in the
other free group then $G$ is not a limit group'' but no further details
are given. However this is not true in full generality, even for any
definition of ``high'': if $G_1$ is free on $x,y$; $G_2$ free on $a,b$ and
$G=G_1*_{w_1=w_2}G_2$ is formed by setting $[x,y]=a^mb^n$ for $m,n\neq 0$ 
with highest common factor $d$ then
consider the homomorphism from $G$ onto the free group $F(t,u)$ on $t,u$
given by
$x\mapsto t,y\mapsto u^{mn/d}$ and $a\mapsto tu^{n/d}t^{-1}, 
b\mapsto u^{-m/d}$. This sends both rank 2 free groups $G_1$ and $G_2$ to
rank 2 free subgroups of $F(t,u)$, thus the restriction to each $G_i$ is
injective. Consequently $G$ is an example of a generalised double over 
$F(t,u)$ as in Definition 4.4 of \cite{chmp}, with Proposition 4.7 of
\cite{chmp} showing that a generalised double over a limit group is also
a limit group.

In fact the quote above becomes true if we change ``powers'' to
``powers of commutators'' with a suitably weak definition of ``high''.
To prove this we will use stable
commutator length, as described in \cite{cal}. This can be explained
briefly as follows: a {\bf length} on a group $G$ is a function
$l:G\rightarrow\R$ such that for $g,\gamma\in G$ we have
\[l(g\gamma)\le l(g)+l(\gamma)\mbox{ and }l(g)=l(g^{-1}).\]
(Sometimes $l(e)=0$ is required but this will not affect any of our
results here.) From any length function $l$ we obtain {\bf stable length}
$\sigma:G\rightarrow\R$ defined by $\sigma(g)=\lim_{n\rightarrow\infty}
l(g^n)/n$. As for any $g\in G$ the sequence $a_n=l(g^n)$ is subadditive
(meaning that $a_{n+m}\le a_m+a_m$ for $n,m\in\N$), this limit exists
provided only that there is $K\le 0$ with $a_n\ge Kn$ for all $n$. It is
straightforward to show using only the above properties that $\sigma$
is constant on conjugacy classes, that $\sigma(g^k)=|k|\sigma(g)$ for all
$k\in\Z$ and $g\in G$, and that $\sigma(gh)\le\sigma(g)+\sigma(h)$ if
$g$ and $h$ commute (although not in general). Given a group $G$, 
{\bf commutator length} cl is a length on the commutator subgroup 
$G'$ of $G$ with
cl$(g)$ defined to be the minimum number of
commutators needed to form a product equal to $g$ and
{\bf stable commutator length} is defined to be the stable length
that results, denoted by scl$(g)$. A non trivial fact about scl
in free groups $F$ which we will need here is that
every non identity element $w\in F'$ has scl$(w)\ge 1/2$
by \cite{cal} Theorem 4.111. Combining this with the point that in
general a
commutator $[w_1,w_2]$ has scl$([w_1,w_2])\leq 1/2$, we see that non
trivial commutators in free groups have stable commutator length
exactly $1/2$.
\begin{thm} If $F_1$ and $F_2$ are free non abelian groups then the
cyclically pinched group $G=F_1*_{\gamma=\delta^m\eta^n}F_2$, where
$\gamma$ is a non trivial commutator in $F_1$ and $\delta,\eta$ are both
non trivial commutators in $F_2$ with $\delta\neq\eta^{\pm 1}$,
is not residually free whenever
$|m|-|n|\ge 3$ and $|m|,|n|\neq$ 0 or 1, even though $G$ embeds in $\slt$.
\end{thm}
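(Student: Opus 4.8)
The plan is to treat the two assertions separately: first that $G$ embeds in $\slt$, which is a quick application of the machinery of Section 3, and then the main claim that $G$ is not residually free, which is where stable commutator length does the work.

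For the embedding I would check that the two amalgamated elements are primitive in their factors and invoke Theorem 3.3. A single non-trivial commutator $\gamma$ is never a proper power in a free group: if $\gamma=w^k$ with $k\ge 2$ then abelianising forces $w\in F_1'$, so scl$(w)\ge 1/2$ by the lower bound, giving scl$(\gamma)=k\,$scl$(w)\ge k/2\ge 1$, contradicting scl$(\gamma)=1/2$. Hence $\gamma$ is primitive. For $\delta^m\eta^n$ I would first note that $\delta$ and $\eta$ do not commute in $F_2$: two commuting elements of a free group are powers of a common $c$, and since $\delta,\eta$ are commutators the equal values scl$(\delta)=$ scl$(\eta)=1/2$ force $|i|=|j|$ where $\delta=c^i,\eta=c^j$, i.e. $\delta=\eta^{\pm 1}$, which is excluded. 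With $\delta,\eta$ non-commuting and $|m|,|n|\ge 2$, the result of \cite{lyscz} says $\delta^m\eta^n$ is not a proper power, so it too is primitive. Realising $F_1$ and $F_2$ as subgroups of $\slt$ with transcendental traces and all centralisers infinite cyclic (standard for free groups), Theorem 3.3 applied with $a=\gamma$ and $b=\delta^m\eta^n$ then gives the embedding.

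The heart of the matter is non-residual-freeness, and here I would exhibit a single non-trivial element of $G$ killed by every homomorphism to a free group, namely $\delta$ itself. Fix an arbitrary $\phi:G\to F$ with $F$ free and write $C=\phi(\gamma)$, $V=\phi(\delta)$, $W=\phi(\eta)$; all three lie in $F'$ since $\gamma,\delta,\eta$ lie in the commutator subgroups of the factors. The defining relation gives $V^m=CW^{-n}$ in $F$. Now scl is monotone under homomorphisms and a single commutator has scl at most $1/2$, so scl$(C)\le 1/2$ and scl$(W)\le 1/2$; combined with scl$(W^{-n})=|n|\,$scl$(W)$ and the subadditivity-with-defect bound scl$(gh)\le$ scl$(g)+$ scl$(h)+1/2$ this yields
\[\mbox{scl}(V^m)\le\mbox{scl}(C)+\mbox{scl}(W^{-n})+\tfrac12\le\tfrac12+\tfrac{|n|}{2}+\tfrac12=1+\tfrac{|n|}{2}.\]
If $V\neq e$ then scl$(V)\ge 1/2$ by the lower bound for stable commutator length in a free group, so scl$(V^m)=|m|\,$scl$(V)\ge |m|/2$, forcing $|m|/2\le 1+|n|/2$, i.e. $|m|-|n|\le 2$. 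This contradicts the hypothesis $|m|-|n|\ge 3$, so $\phi(\delta)=V=e$ for every such $\phi$. Since $\delta$ is non-trivial in the free factor $F_2\le G$ it is non-trivial in $G$, and being killed by all maps to free groups it witnesses that $G$ is not residually free.

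The step I expect to need most care is the inequality scl$(gh)\le$ scl$(g)+$ scl$(h)+1/2$: the excerpt only records subadditivity for commuting elements and explicitly warns it fails in general, so I would justify the defect version from Bavard duality (for each homogeneous quasimorphism $\psi$ one has $|\psi(gh)|\le|\psi(g)|+|\psi(h)|+D(\psi)$; dividing by $2D(\psi)$ and taking the supremum gives the claim) or simply cite \cite{cal}. It is worth remarking that the threshold is sharp for this argument: the two appearances of $1/2$ (one from the bound on scl$(C)$, one from the defect) are exactly what produces $|m|-|n|\le 2$, so $|m|-|n|\ge 3$ is the natural hypothesis, and the conditions $\delta\neq\eta^{\pm 1}$ and $|m|,|n|\ge 2$ are needed only for the embedding via primitivity, not for the non-residual-freeness.
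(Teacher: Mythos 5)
Your proof is correct and follows essentially the same route as the paper: primitivity of $\gamma$ and of $\delta^m\eta^n$ (the latter via Lyndon--Sch\"{u}tzenberger) feeds into Theorem 3.3 for the embedding, and non-residual-freeness comes from Bavard duality combined with the bounds $\mathrm{scl}\ge 1/2$ on $F'-\{e\}$ and $\mathrm{scl}\le 1/2$ for commutators. The only cosmetic differences are that you package the quasimorphism estimate as the inequality $\mathrm{scl}(gh)\le\mathrm{scl}(g)+\mathrm{scl}(h)+\tfrac12$ and deduce that every map to a free group kills $\delta$, whereas the paper applies an extremal quasimorphism for $\theta(\delta)$ directly and deduces that $\gamma$ is killed, after first disposing of the cases $\theta(\delta)=e$ or $\theta(\eta)=e$ by Sch\"{u}tzenberger's observation that a non-trivial commutator is not a proper power.
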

\begin{proof}
We can assume by changing elements to inverses that $m,n>0$.
We know that $\gamma\neq e$ in $G$ so suppose we have a homomorphism
$\theta$ from $G$ to a free group $F$ where $\theta(\gamma)=a\neq e$. This
gives us a non trivial commutator in $F$ which is equal to $b^mc^n$ for
two (possibly trivial) commutators $b=\theta(\delta)$ and
$c=\theta(\eta)$ in $F$. First suppose that $b$
is trivial in $F$ then $a=c^n$ but in a free group a non trivial
commutator cannot be a proper power. (This is due to Sch\"{u}tzenberger
but can also be seen here because in a free group 
scl$(a)=1/2=|n|\mbox{scl}(c)\geq |n|/2\geq 1$, using the fact that $c$
must be in the commutator subgroup $F'$ too.) The same holds if $c$ is trivial.
As $\gamma$ is primitive and we mentioned that $\delta^m\eta^n$ is also
primitive, Corollary 3.8 tells us that $G$ embeds in $\slt$.
Therefore we are done by the next Proposition.
\end{proof}
\begin{prop}
If the equation $a=b^mc^n$ holds in a free group $F$ where $a,b,c$ are all 
non trivial commutators in $F$ then we cannot have
$|m|-|n|\ge 3$.
\end{prop}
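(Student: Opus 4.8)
The plan is to play the lower bound for stable commutator length in a free group against a suitable \emph{quasi}-subadditivity bound for scl. Since $a$, $b$ and $c$ are all non-trivial commutators in the free group $F$, the fact quoted just before the theorem gives $\mathrm{scl}(a)=\mathrm{scl}(b)=\mathrm{scl}(c)=1/2$. The idea is to rewrite the equation $a=b^mc^n$ as $b^m=ac^{-n}$ and to estimate the two sides against each other: the left side is controlled from below by homogeneity, while the right side must be controlled from above. Since $|m|-|n|\ge 3$ forces $|m|\ge|n|$, isolating the largest power $b^m$ on one side is the natural move.

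First I would record the homogeneity and inversion-invariance of scl, which are among the properties listed for any stable length in the paper: $\mathrm{scl}(b^m)=|m|\,\mathrm{scl}(b)=|m|/2$ and $\mathrm{scl}(c^{-n})=|n|\,\mathrm{scl}(c)=|n|/2$. The point where genuine care is needed is the right-hand side, because (as the paper stresses) scl is \emph{not} subadditive in general, so one cannot simply write $\mathrm{scl}(ac^{-n})\le\mathrm{scl}(a)+\mathrm{scl}(c^{-n})$.

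The key step, and the main obstacle, is therefore to supply the correct replacement for naive subadditivity. Using Bavard duality from \cite{cal}, which expresses $\mathrm{scl}(g)=\sup_\phi|\phi(g)|/(2D(\phi))$ for $g$ in the commutator subgroup as a supremum over homogeneous quasimorphisms $\phi$ of defect $D(\phi)$, one has $|\phi(gh)|\le|\phi(g)|+|\phi(h)|+D(\phi)$ for every such $\phi$ together with $|\phi(g)|\le 2D(\phi)\,\mathrm{scl}(g)$ and similarly for $h$. Dividing by $2D(\phi)$ and taking the supremum yields the quasi-subadditivity bound
\[\mathrm{scl}(gh)\le\mathrm{scl}(g)+\mathrm{scl}(h)+\tfrac12,\]
valid whenever $g$, $h$ lie in the commutator subgroup; this is exactly the naive bound plus a defect term of $1/2$. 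In our situation $a$, $c^{-n}$ and their product $b^m$ all lie in $F'$, so every scl appearing is finite and the inequality applies.

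Applying this with $g=a$ and $h=c^{-n}$ then gives
\[\frac{|m|}{2}=\mathrm{scl}(b^m)=\mathrm{scl}(ac^{-n})\le\mathrm{scl}(a)+\mathrm{scl}(c^{-n})+\frac12=\frac12+\frac{|n|}{2}+\frac12,\]
so that $|m|\le|n|+2$, that is $|m|-|n|\le 2$. This contradicts $|m|-|n|\ge 3$ and finishes the proof; note that the argument needs no case analysis on the signs or vanishing of $m,n$, since it works uniformly with absolute values. I expect the only delicate point to be justifying the $+1/2$ correction term, which is precisely the gap between naive subadditivity (false here) and the defect-corrected bound; everything else is bookkeeping with the standard properties of scl already collected in the paper.
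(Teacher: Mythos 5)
Your proof is correct and rests on exactly the same ingredients as the paper's: Bavard duality, the lower bound $\mathrm{scl}\ge 1/2$ for non-trivial elements of $F'$ applied to $b$, and the bound $|\phi(\gamma)|\le D(\phi)$ (equivalently $\mathrm{scl}\le 1/2$) for the commutators $a$ and $c$, yielding the same conclusion $|m|\le |n|+2$. The only difference is presentational: where the paper fixes a near-extremal homogeneous quasimorphism for $b$ and runs the defect estimate explicitly, you package that identical computation as the general quasi-subadditivity inequality $\mathrm{scl}(gh)\le\mathrm{scl}(g)+\mathrm{scl}(h)+\tfrac12$ and then apply it to $b^m=ac^{-n}$, which is a clean and valid way to organise the same estimates.
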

\begin{proof}
By Theorem 2.70 in \cite{cal} using Barvard
duality, we have that for any $g$ in $G'$, 
\[\mbox{scl}(g)=\frac{1}{2}
\mbox{sup}_{\phi\in Q(G)}\frac{|\phi(g)|}{|D(\phi)|}\] 
where $Q(G)$ is the
space of homogeneous quasimorphisms on $G$ and $D(\phi)$ is the defect of 
$\phi$. This means that 
for all $a,b\in G$ and $k\in\Z$ we have
$|\phi(ab)-\phi(a)-\phi(b)|\le D(\phi)$ (a quasimorphism) and 
(homogeneity) $\phi(a^k)=k\phi(a)$.

Thus we can take a homogeneous quasimorphism $\phi_b\in Q$ with
$|\phi_b(b)|/D(\phi_b)$ arbitrarily close to 1 as $\mbox{scl}(b)=1/2$.
By rescaling, let us say that
$D(\phi_b)=1$ and we can assume $m\phi_b(b)>n+2$ because $m-n\ge 3$.
(In fact in a free group this supremum is obtained but we will not
need to use that
here.) Now for any homogeneous
quasimorphism $q$ on any group $G$ we have that
$|q(\gamma)|\le D(q)$ if $\gamma$ is a commutator in $G$. But as $a=b^mc^n$
holds in $F$ we see that
\[m\phi_b(b)-n-1\le
m\phi_b(b)-n|\phi_b(c)|-|\phi_b(a)|\leq|m\phi_b(b)+n\phi_b(c)
-\phi_b(a)|\le 1\]
so $m\phi_b(b)\le n+2$, giving us a contradiction.
\end{proof}    
\section{2 Dimensional linearity of 3-manifold groups}
Despite Perelman's positive solution to Thurston's Geometrisation
Conjecture, which as a consequence establishes the fact that all
finitely generated 3-manifold groups are residually finite, it is still
an open question as to whether all such groups are linear. Indeed in
\cite{kir} Problem 3.33 (A) Thurston asks whether all such groups can be
embedded in $GL(4,\R)$, which was unknown until just now \cite{btnx}.
There has been very recent progress, using results 
of Wise on virtually special groups, which shows that the fundamental group
$\pi_1(M^3)$ of most compact 3-manifolds $M^3$ has a finite index subgroup 
which embeds in a right angled Artin group, thus this subgroup and hence
also $\pi_1(M^3)$ will be linear over $\Z$, although the dimension might be 
very big. Indeed following the release of \cite{liu} and \cite{ppw}, the 
only compact orientable irreducible 3-manifolds not having virtually special  
fundamental group are those closed Siefert fibre spaces and those closed
graph 3-manifolds which do not admit a Riemannian metric of non positive
curvature, but the former are known to be linear over $\Z$.
 
Here we will restrict ourselves to examining which closed 3-manifolds have
fundamental groups that embed in $SL(2,\mathbb C)$, in addition to those
that embed discretely. Our first result is for torus bundles, none of
which have fundamental groups that embed discretely.
\begin{thm} If $M^3$ is a closed 3-manifold which is a 2 dimensional
torus bundle over the circle, so that $\pi_1(M^3)=\Z^2\rtimes_{\alpha}\Z$
where $\alpha$ is the automorphism of $\Z^2$ induced by the gluing map, then
$\pi_1(M^3)$ is a subgroup of $\slt$ if and only if $\alpha$ is the
identity or is a hyperbolic map (that is all powers of $\alpha$ fix only 0).
\end{thm}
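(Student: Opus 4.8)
The plan is to classify the automorphism $\alpha\in GL(2,\Z)$ by its action and show embeddability holds exactly in the two stated cases. The matrix $\alpha$ falls into three types according to the eigenvalues of its integer matrix representative: the identity (or more generally finite order), the hyperbolic case where $\alpha$ has two distinct real eigenvalues (equivalently $|\mathrm{tr}(\alpha)|>2$, so no nonzero vector is fixed by any power), and the remaining \emph{parabolic/elliptic} cases where $\alpha$ has finite order greater than one or is unipotent with a fixed line. I would first dispose of the negative direction: if $\alpha$ is neither the identity nor hyperbolic, then $\pi_1(M^3)$ fails to be CT, so by the remarks in Section~2 it cannot embed in $\slt$.

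For the negative direction I would argue as follows. If $\alpha$ has finite order $k>1$, then $\langle s^k\rangle$ is central (it commutes with the $\Z^2$ factor and with $s$), giving a torsion-free non-abelian group with nontrivial centre; this is virtually nilpotent (indeed virtually $\Z^3$), so Lemma~2.1 forces it to be abelian, a contradiction. If $\alpha$ is parabolic (unipotent, $\pm$ a shear), then some power of $\alpha$ fixes a nonzero $v\in\Z^2$, so that $v$ and $s$ both centralise $v$ while not all three of $v,s,\alpha(\text{other basis vector})$ commute; concretely one exhibits two non-commuting elements sharing a common centralising element, violating CT. The cleanest formulation is: a power of $\alpha$ having a nonzero fixed vector produces an element $x\in\Z^2-\{0\}$ whose centraliser in $\pi_1(M^3)$ strictly contains $\Z^2$ but is not all of $G$, contradicting the CT characterisation that centralisers of nonidentity elements are the (unique maximal) abelian subgroups.

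For the positive direction I would construct explicit matrices. When $\alpha$ is the identity, $\pi_1(M^3)=\Z^3$, which embeds by upper-triangular unipotent matrices with $\Q$-linearly independent entries as in the Section~3 discussion. When $\alpha$ is hyperbolic, it is conjugate in $GL(2,\R)$ to $\mathrm{diag}(\nu,\nu^{-1})$ for a real eigenvalue $\nu$ with $|\nu|\neq 1$; this is precisely the Sol-geometry case, and I would realise $\Z^2$ by two commuting parabolics $\sma{rr}1&x_1\\0&1\fma$, $\sma{rr}1&x_2\\0&1\fma$ and the stable letter $s$ by a diagonal matrix $\sma{rr}\sqrt{\nu}&0\\0&1/\sqrt{\nu}\fma$, so that conjugation by $s$ scales the top-right entries by $\nu$. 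The key point is to choose $x_1,x_2$ so that conjugation by $s$ reproduces the action of $\alpha$: since $\alpha$ acts on the rank-2 lattice $\langle x_1,x_2\rangle_\Z$ inside $(\R,+)$ by multiplication by $\nu$, one needs $\{x_1,x_2\}$ to span an $\alpha$-invariant lattice, which exists precisely because $\nu$ is an eigenvalue of the integer matrix $\alpha$ and $x_1,x_2$ can be taken in the corresponding eigenline scaled to be $\Q$-linearly independent.

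**The hard part will be** verifying in the hyperbolic case that the top-right entries can be chosen compatibly: one must pick $x_1,x_2\in\R$ that are linearly independent over $\Q$ (so that $\langle x_1,x_2\rangle\cong\Z^2$ faithfully) \emph{and} on which multiplication by $\nu$ realises the prescribed integer matrix $\alpha$ in the chosen basis. This is exactly arranged by taking the eigenvector of $\alpha$ for eigenvalue $\nu$ with entries in $\Q(\nu)$ and letting $(x_1,x_2)$ be a $\Z$-basis of the image lattice; the irrationality of $\nu$ guarantees $\Q$-linear independence. I would then check directly that the relations $s\,x_i\,s^{-1}=\alpha(x_i)$ hold and that the resulting subgroup has no element equal to $-I$, completing the embedding.
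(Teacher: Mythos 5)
Your proposal is correct and follows essentially the same route as the paper: the negative direction via the CT property (the paper runs the single transitivity chain $t\leftrightarrow t^n\leftrightarrow x\leftrightarrow\Z^2$ where you split into the finite-order case via Lemma 2.1 and the non-abelian-centraliser case, but these are the same tools), and the positive direction by representing the fibre as commuting parabolics whose top-right entries form an eigenvector of the gluing matrix and the stable letter as a diagonal matrix, with irrationality of the eigenvalue giving faithfulness. The only point to make explicit is global injectivity — that $a^pb^qt^r=I$ forces $r=0$ since $|\nu|\neq 1$ and then $p=q=0$ by $\Q$-independence — which is a one-line addition to what you sketch.
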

\begin{proof} Let $G=\pi_1(M^3)$ and $\Z^2=\langle a,b\rangle$, with
conjugation by the stable letter $t$ giving our automorphism $\alpha$.
If this is the identity then $G=\Z^3$ embeds in $\slt$. If
some positive power $\alpha^n$ fixes $x\in\Z^2-\{0\}$ and $G$ embeds in
$\slt$ then $t$ commutes with $x$ because $G$ is CT, but $x$ commutes
with all of the fibre subgroup $\Z^2$ so $t$ does too and so we have the
identity automorphism.

Otherwise the eigenvalues of $\alpha$ are not roots of unity and so
not of modulus 1, because they satisfy a monic integer quadratic with
constant term $\pm 1$.
Say $\alpha(a)=tat^{-1}=a^ib^j$ and $\alpha(b)=tbt^{-1}=a^kb^l$. We set
\[a=\sma{rr}1&1\\0&1\fma,\qquad b=\sma{rr}1&x\\0&1\fma\qquad\mbox{and }
t=\sma{cc}\mu&0\\0&\mu^{-1}\fma\]
where $x$ and $\mu$ are complex numbers to be determined. For the above
two relations to be satisfied, we require $\mu^2=i+xj$ and $\mu^2 x=
k+xl$. This just corresponds to the matrix
$\sma{rr}i&j\\k&l\fma\in GL(2,\Z)$ having an eigenvalue $\mu^2$ with the
eigenvector $\sma{c}1\\x\fma$ which does occur, and $x$ is
not zero because $\sma{c}1\\0\fma$
or $\sma{c}0\\1\fma$ being an eigenvector implies that 
there was an eigenvalue of $\pm 1$. Moreover $x\in\Q$ implies that
$\mu^2\in\Z$ but the determinant being $\pm 1$ would give
$\mu^2=\pm 1$ too, which has been eliminated. Therefore $\mu^2$
not being a
root of unity implies that the matrix $t$ has infinite order, and the
matrix $a^pb^qt^r=\sma{cc}\mu^r&\mu^{-r}(p+xq)\\0&\mu^{-r}\fma$ can
only be the identity if $p=q=r=0$, so this
representation of $G$ in $\slt$ is faithful.\end{proof}
Remarks: (1) It did not matter here if $\alpha$ was orientation preserving
or reversing and hence if $M^3$ were orientable or not.\\
(2) This shows just how much variation there can be in the geometry of
a 3-manifold and still the fundamental group embeds in $\slt$. For
instance we can take the connected sum of a Lens space (with finite
cyclic fundamental group), the 3-torus (with fundamental group
$\Z^3$), a closed orientable hyperbolic 3-manifold and a torus bundle
with a hyperbolic automorphism as in Theorem 5.1. The fundamental
group of the resulting 3-manifold embeds in $\slt$ by Lemma 3.1 but there
are pieces of this manifold which are positively curved, have zero
curvature, have negative curvature and which do not admit a metric of
nonpositive or of nonnegative curvature.

Based on these examples, one might make a naive conjecture as to when the
fundamental group $G=\pi_1(M^3)$ of a closed orientable irreducible
3-manifold $M^3$ embeds in $\slt$
as follows: first suppose that $M^3$ admits one of the
eight 3 dimensional geometries. We go work through
the groups obtained from manifolds having $S^3$ 
geometry and list those which embed in $\slt$ 
and we do the same with manifolds having  
the Sol (virtually soluble) geometry (where all groups will have a finite
cover of the form in Theorem 5.1 with hyperbolic gluing map).
Then of course all groups coming from manifolds with the $\HH^3$
geometry embed, but we obtain no further groups from the other geometries
apart from $\Z$ and $\Z^3$. 
This is because now $\pi_1(M^3)$ must be infinite and torsion free. Either
we have the $S^2\times\R$ geometry (giving virtually cyclic groups), 
the $\E^3$ geometry (virtually abelian), or
we have Nil (virtually nilpotent groups), $\HH^2\times\R$
or $\tilde{PSL}(2,\R)$ geometries. The last two cases give rise to
Siefert fibred spaces so here the fundamental group will have a finite
cover having a normal infinite
cyclic subgroup $\langle x\rangle$. This means that $gxg^{-1}=x^{\pm 1}$ 
for all group elements $g$ in the finite cover, so 
the centraliser of $x$ has finite index in the fundamental group.
Consequently Lemma 2.1 applies to all five of these geometries.

This leaves us with closed orientable irreducible 3-manifolds which do
not possess a geometry but which can be cut along embedded incompressible
tori to obtain hyperbolic 3-manifolds with torus boundary and Seifert
fibre spaces with torus boundary. Again we
have by Lemma 2.1 that the fundamental group of a Seifert fibre space
cannot embed in $\slt$ unless it is abelian. As we have non empty boundary
here, this leaves only $\Z$ and $\Z\times\Z$. Thus if there is a
Seifert fibre space in the torus decomposition of $M^3$ then its fundamental
group will be a subgroup of $\pi_1(M^3)$ which cannot therefore embed in
$\slt$. Therefore we might finish off our conjecture by hoping that if
all the pieces in the torus decomposition of $M^3$ are hyperbolic (and
hence embed in $\slt$) then
$\pi_1(M^3)$ is a subgroup of $\slt$. However this turns out not to be
the case, even if $\pi_1(M^3)$ is a CSA group. 
\begin{thm}
Let $M_1^3,M_2^3$ be two copies of the figure eight knot complement
with each boundary
$\partial M_1^3,\partial M_2^3$ a torus, and let $M^3$ be the closed
3-manifold formed by gluing the boundary tori together using any
orientation preserving homeomorphism which identifies the two meridians,
with the exception of the homeomorphism which also identifies the two
longitudes.
Then $\pi_1(M^3)$ does not embed in $\slt$ 
although it is CT and even a CSA group.
\end{thm}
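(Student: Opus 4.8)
The plan is to use the explicit description of all $\slt$-representations of the figure eight knot group due to Whittemore, combined with the CSA structure results from Section 2. The key observation is that $\pi_1(M^3)$ decomposes as an amalgamated free product $\pi_1(M_1^3)*_{T}\pi_1(M_2^3)$ over the peripheral subgroup $T\cong\Z^2$ coming from the glued torus, since the meridian-identifying gluing map identifies the two boundary tori. Any embedding of $\pi_1(M^3)$ into $\slt$ would restrict to faithful representations of each figure eight knot group $\pi_1(M_i^3)$, and crucially the two copies of the peripheral $\Z^2$ must be sent to the \emph{same} abelian subgroup of $\slt$, with the gluing identification respected on the nose.

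First I would set up the peripheral data. The figure eight knot group has the standard two-generator presentation, and the boundary torus is generated by the meridian $\mu$ and the longitude $\ell$, with the fibration structure forcing a specific relation. I would write down Whittemore's parametrisation of the representations $\rho:\pi_1(M_i^3)\to\slt$, recording in particular the eigenvalue of $\rho(\mu_i)$ and of $\rho(\ell_i)$ as functions of the single trace parameter that governs the representation. Since $\pi_1(M^3)$ is CSA (which I would verify via Corollary 2.3 and the malnormality machinery of Lemmas 2.4 and 2.5, noting the peripheral $\Z^2$ is malnormal in each knot group), any faithful image of $T$ lands in a maximal abelian subgroup of $\slt$; because $T\cong\Z^2$ is non-cyclic, this forces $\rho(T)$ to be a group of \emph{parabolic} matrices sharing a common fixed point, i.e.\ conjugate into the upper triangular unipotent-times-sign subgroup. (A diagonalisable abelian subgroup of $\slt$ containing a rank-2 free abelian group is impossible since the diagonal torus is itself abelian of rank at most that of $\C^*$ but cannot contain two independent elements both acting faithfully while the meridian is parabolic on the hyperbolic piece.)

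The heart of the argument is the compatibility constraint at the gluing. On the $M_1$ side the meridian $\mu_1$ and longitude $\ell_1$ must map to parabolics $\sma{rr}1&p\\0&1\fma$ and $\sma{rr}1&q\\0&1\fma$ (up to sign), so the ratio $q/p$ — the \emph{cusp shape} or modulus — is an invariant of the representation, and similarly $q'/p'$ for $M_2$. The gluing identifies $\mu_1$ with $\mu_2$ and sends $\ell_1$ to some $\mu_2^a\ell_2^b$ determined by the homeomorphism; faithfulness forces the two parabolic subgroups to coincide, which pins down a \emph{rational} relation between the cusp shapes of the two representations. Using Whittemore's explicit formulas, I would compute that the cusp shape of the figure eight knot in any $\slt$-representation is forced to take a value lying in a specific quadratic number field (indeed the complete hyperbolic structure gives the modulus involving $\sqrt{-3}$), and that matching the two sides under a meridian-preserving but \emph{not} longitude-preserving gluing produces an algebraic equation with no solution — except precisely in the excluded case where the longitudes are also identified, where the shapes match trivially.

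The main obstacle I expect is the explicit algebra: extracting from Whittemore's representation variety the exact dependence of the longitude eigenvalue (equivalently the cusp modulus) on the meridian trace parameter, and then showing that the only simultaneous solution of the two cusp-shape equations occurs at the longitude-matching gluing. This is where one must rule out all the non-discrete (indiscrete) representations as well, not just the discrete faithful one, so it is not enough to invoke rigidity of the hyperbolic structure — one genuinely needs the full representation variety. I would organise this by reducing to a single polynomial identity in the trace parameters of the two sides and checking it has the claimed solution set, with the CSA/parabolicity reductions doing the work of cutting the representation variety down to the one-dimensional piece where the meridian is parabolic.
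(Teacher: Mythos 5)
There is a genuine gap at the central reduction. You claim that because the peripheral subgroup $T\cong\Z\times\Z$ is non-cyclic, its image under any embedding must consist of parabolic matrices, and your parenthetical justification --- that a diagonalisable abelian subgroup of $\slt$ cannot contain a rank-2 free abelian group --- is false. The diagonal subgroup of $\slt$ is isomorphic to $\C^*$, which contains free abelian subgroups of any rank (take $\mathrm{diag}(\lambda,\lambda^{-1})$ and $\mathrm{diag}(\mu,\mu^{-1})$ with $\lambda,\mu$ multiplicatively independent); indeed the paper's Theorem 5.1 and the indiscrete $\Z^m$ examples in Section 3 depend on exactly this phenomenon. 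Worse, in Whittemore's parametrisation the representations with parabolic meridian are only the isolated ones of type (1) (essentially the discrete faithful representation and its complex conjugate); on the whole one-parameter curve of representations (type (2)) the meridian is the diagonalisable matrix $\mathrm{diag}(\lambda,\lambda^{-1})$ and the longitude is likewise diagonal. So your cusp-shape argument only addresses the discrete case, where the conclusion is close to Mostow rigidity, and misses the main content of the theorem, which is to rule out the indiscrete embeddings. Your closing remark that the CSA/parabolicity reductions cut the representation variety down to ``the one-dimensional piece where the meridian is parabolic'' has it backwards: the one-dimensional piece is the one with diagonalisable meridian.

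The paper's proof runs your high-level strategy (Whittemore's curve, match the meridians, derive a constraint on the longitudes) but in the diagonal setting: it shows the trace of the longitude is a function of $\lambda$ alone (eliminating the sign ambiguity in $z$ via the relation $z^2=(1+x^2)z-2x^2+1$), deduces from $m_2=m_1^{-1}$ that $\lambda_2=\lambda_1^{\pm1}$ and hence that the longitude eigenvalues satisfy $d_2=d_1^{\pm1}$, and then confronts this with the gluing relation $l_2=m_1^{\,n}l_1$, i.e.\ $d_2=\lambda_1^{\,n}d_1$, forcing $n=0$ using only that $\langle m_1,l_1\rangle\cong\Z\times\Z$. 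To repair your argument you would need to replace the parabolicity/cusp-shape step by this diagonal computation (or an equivalent one), and treat separately the genuinely parabolic case (1), which is the only place where a computation of the kind you describe actually applies.
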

\begin{proof}
Although $M_1^3$ has only one discrete faithful embedding in $\slt$
(because it has finite hyperbolic volume so Mostow rigidity 
applies), up to conjugation in $\slt$, replacing matrices with their
negative and taking complex conjugates,
there is a whole curve of representations. This curve was found in
\cite{whi} where it was shown that if $A$ and $B$ are two 
non commuting
elements of
$\slt$ satisfying
\[r(A,B)=B^{-1}A^{-1}BAB^{-1}ABA^{-1}B^{-1}A=I\]
then we can conjugate $A$ and $B$ so that either
\begin{equation}
A=\sma{cc}1&1\\0&1\fma\mbox{ and }B=\sma{rc}1&0\\-\omega&1\fma\mbox{ where }
\omega=e^{2\pi i/3}\mbox{ or }e^{-2\pi i/3}
\end{equation}
(or $A$ and $B$ are both minus the above)
or we can take
\begin{equation}
A=\sma{cl}\lambda&0\\0&\lambda^{-1}\fma\mbox{ and }B=\sma{cc}
\mu&1\\\mu(x-\mu)-1&x-\mu\fma
\end{equation}
(or both minus this) where 
$\lambda\in\C-\{0,\pm 1\}$, $x=\lambda+\lambda^{-1}$,
\[z=\frac{1}{2}\left(1+x^2\pm \sqrt{(x^2-1)(x^2-5)}\right)\mbox{ and } 
\mu=\frac{\lambda z-x}{\lambda^2-1}.\]

The figure eight knot complement
can also be thought of as the once punctured torus
bundle given by taking the free group of rank 2 on $a,b$ and forming the
HNN extension
\[\langle t,a,b|tat^{-1}=aba,tbt^{-1}=ba\rangle\]
with stable letter $t$. Then $\langle t,aba^{-1}b^{-1}\rangle=\Z\times\Z$
which forms the boundary torus with $t$ the meridian and $aba^{-1}b^{-1}$
the longitude. That this is isomorphic to the group 
$\langle A,B|r(A,B)\rangle$ given above can be seen by setting $a=BA^{-1}$,
$b=B^{-1}ABA^{-1}$ and $t=A$, so that $A=t$ and $B=at$ which is also equal
to $b^{-1}tb$, explaining why the matrices $A$ and $B$ given above are
conjugate in $\slt$. This means that the meridian $m$ is equal to
$A$ and the longitude is $l=BA^{-1}B^{-1}A^2B^{-1}A^{-1}B$.

We first make a couple of observations about the form of the matrices
in (2) in order to exploit the symmetries that are present. First
note that if we replace
$\lambda$ with $\lambda^{-1}$ in (2) then the top left and bottom right
hand entries of $A$ are swapped whereas the other entries stay the same, 
and also for $B$. However we can now conjugate the new $A$ and $B$ by
$\sma{cc}0&\nu\\-\nu^{-1}&0\fma$ for an appropriate choice of $\nu$
so that they become the original $A$ and $B$. Thus we can change $\lambda$
to $1/\lambda$ if desired because they both represent the same parametrisation
up to conjugacy. Now the longitude $l$ commutes with $A$ and 
so should be of the form $\sma{cc}f(\lambda)&0\\0&f(\lambda)^{-1}\fma$ for
some function $f$, because the entries of any element in this group depend only
$\lambda,z,x$ and $\mu$, with the last three variables all functions of 
$\lambda$. But there appears to be an ambiguity in that plus or minus
a square root is taken in the definition of $z$, which would result in two
possible functions $f_+(\lambda)$ or $f_-(\lambda)$ giving rise to the entries
on the diagonal of $l$. 

However we can use the standard identities to
find the trace $\tau$ of $l$ in terms of the traces of $A,B$ and $AB$, which 
here are equal to $x,x$ and $z$ respectively. The answer is
$\tau(x,x,z)=2+x^2(z-2)(x^4+(z+2)(z+2-2x^2))$ but we also have the equation
$z^2=(1+x^2)z-2x^2+1$ holding which is quadratic in $z$. If we now
substitute this expression for $z^2$ into $\tau$ we find that
$z$ vanishes, so we are left with the trace of $l$ being a function of 
$x$ only, hence of a function of $\lambda$ only too.

We now suppose that we have an embedding of $G=\pi_1(M^3)$ in $\slt$ where
$G=G_1*_{\Z\times\Z}G_2$ for $G_i=\pi_1(M_i^3)$ and $\Z\times\Z$ is the
fundamental group of the
boundary torus. We can conjugate $G$ in $\slt$ so that without loss of
generality we have $G_1=\langle A_1,B_1\rangle$
with $A_1,B_1$ matrices in the form above, giving rise to the meridian
$m_1=A_1$ and the longitude $l_1(A_1,B_1)$. 
First suppose that $A_1$ and
$B_1$ are of the form in (2) for some parameter $\lambda_1\neq 0,\pm 1$
(and other parameters $x_1,z_1,\mu_1$ depending on $\lambda_1$), so that
the longitude $l_1(A_1,B_1)$ will also be a diagonal
matrix $\sma{cl}d_1&0\\0&d_1^{-1}\fma$ say, with 
$\tau(\lambda_1)=d_1+d_1^{-1}$. 
We also have the meridian $m_2$ and
longitude $l_2$ of $G_2$ and we are forcing
$m_2$ to be equal to $m_1^{-1}=A_1^{-1}$, so that the two figure 8 knot
complements are joined on either side of the boundary torus.
Moreover the homeomorphism must identify the
longitude $l_2$ of $M_2^3$ with the curve $m_1^nl_1$ in $\partial M_1^3$
to obtain an orientation reversing homeomorphism between the two boundary 
tori so as to match the orientations of the 3-manifolds. 
In particular the longitude $l_2$ of $G_2$ must be a diagonal matrix
$\sma{cl}d_2&0\\0&d_2^{-1}\fma$ say, because it commutes with $m_2=m_1^{-1}$.

Now although the group $G_2$ has a fixed embedding in $\slt$ because $G$ has
been conjugated in order to put $G_1$ into a suitable form, we 
can also separately conjugate $G_2$, by $X\in\slt$ say, so that $XG_2X^{-1}
=\langle A_2,B_2\rangle$ where again $A_2,B_2$ are as
in (2) though this time with the parameter $\lambda_2$. But 
$X^{-1}A_2^{-1}X=m_2^{-1}=A_1$ is equal
to $\sma{cc}\lambda_1&0\\0&\lambda_1^{-1}\fma$ and the only way that
$A_2^{-1}=\sma{lc}\lambda_2^{-1}&0\\0&\lambda_2\fma$ can be conjugate 
to this is if $\lambda_2=\lambda_1$ or $\lambda_1^{-1}$. 
Therefore we can assume by the above comment that $\lambda_2$ is equal to
$\lambda_1$. But the longitude in $\langle A_2,B_2\rangle$ is the
element $Xl_2X^{-1}$ which will have trace $\tau(\lambda_2)=\tau(\lambda_1)$.
Thus the trace $d_2+d_2^{-1}$ of $l_2$ is also $\tau(\lambda_1)=d_1+d_1^{-1}$,
giving us that $d_2=d_1^{\pm 1}$. But as $l_2=m_1^nl_1$ is a product
of diagonal matrices, we obtain $d_2=\lambda_1^nd_1$. Thus we either
have $d_1^2\lambda_1^n=1$, which is a contradiction because 
$\langle l_1,m_1\rangle=\Z\times\Z$, or $\lambda_1^n=1$ which is also a
contradiction unless $n=0$. 

The case in (1) is similar but quicker. We can assume by conjugating
$G$ in $\slt$, as well as taking minus signs and complex conjugation
if necessary, that $A_1=A$ and $B_1=B$ in (1) for $\omega=e^{2\pi i/3}$
so a quick calculation tells us that $l_1=\sma{cc}-1&-2\sqrt{3}i\\
0&-1\fma$. 
Also we again have $X\in\slt$ so that $XG_2X^{-1}=\langle A_2,B_2\rangle$
for $A_2,B_2$ as in (1) but with $\omega=e^{\pm 2\pi i/3}$ (we can rule
out having to put minus signs in front of $A_2$ and $B_2$
because $A_2=XA_1X^{-1}$ so $A_2$ has trace 2).
Thus $Xl_2X^{-1}$ is equal to $l_1$ or its complex conjugate.
Now $l_2=m_1^nl_1=\sma{rc}-1&-2\sqrt{3}i-n\\0&-1\fma$ but we know 
$A_2=m_2=m_1^{-1}=A_1^{-1}$ so
$X$ conjugates $A_1$ into its inverse and therefore 
can only be $\pm\sma{cr}i&0\\0&-i\fma$. But on comparing $Xl_2X^{-1}$
and $l_1$, we again see that $n$ can only be zero.

As for the last part, it is well known that discrete torsion free subgroups
of $\slt$ are CSA groups.
For instance this can be seen quickly by using
Corollary 2.3 along with the straightforward fact that any group
containing $\pm\sma{cc}1&x\\0&1\fma$ for $x\neq 0$ and 
any infinite order element of the form
$\sma{cl}\lambda&y\\0&\lambda^{-1}\fma$ for $\lambda\neq 0,\pm 1$ and
$y\in\C$ is non discrete. Thus $G_1$ and $G_2$ are CSA, so we are done
by Lemma 2.5.
\end{proof}

In the case where the 
longitudes are glued to each other
we do in fact have an embedding 
of $G$ in $\slt$, giving an example of 
a closed orientable 3-manifold $M$ fibred over the circle with fibre
a genus 2 surface and with a non pseudo-Anosov gluing homeomorphism
but such that $\pi_1(M)$ still embeds in $\slt$. This
follows from Proposition 3.2, provided we can show that there are
faithful embeddings of the fundamental group of the figure 8 knot
complement of the form (2) in Theorem 5.2:
\begin{prop}
For any transcendental number $\lambda\in\C$, the matrices in (2)
provide a  
faithful embedding in $\slt$
of the fundamental group of the figure 8 knot
complement $G$.
\end{prop}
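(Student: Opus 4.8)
The plan is to produce a specific field-theoretic specialisation of Whittemore's parametrisation (2) and then invoke the rigidity of the discrete faithful representation to deduce faithfulness. First I would recall that the fundamental group $G$ of the figure 8 knot complement is the HNN extension $\langle t,a,b\mid tat^{-1}=aba,\,tbt^{-1}=ba\rangle$, and that the discrete faithful representation $\rho_0$ corresponds to a specific algebraic value $\lambda_0$ of the parameter in (2) (coming from the hyperbolic structure, so $\lambda_0$ is an algebraic number, indeed a root of the trace field polynomial). The key point I want to exploit is that this $\rho_0$ is a \emph{faithful} representation into $\slt$, and the matrix entries of $\rho_0(g)$, for each $g\in G$, are polynomials in $\lambda_0,\lambda_0^{-1},z,\mu$ (hence algebraic over $\Q$).

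The main idea is a transcendence/specialisation argument. Given a transcendental $\lambda\in\C$, I would apply Corollary 3.6 (or directly Proposition 3.2) in spirit, but here the cleaner route is: consider the representation $\rho_\lambda$ defined by (2) as a function of the parameter. Faithfulness means that for every $g\in G-\{e\}$ we have $\rho_\lambda(g)\neq I$. I would argue that the set of $\lambda$ for which $\rho_\lambda(g)=I$ is a proper algebraic subset of the parameter variety (since $g\neq e$ implies $\rho_{\lambda_0}(g)\neq I$ by faithfulness of the discrete representation at the algebraic point $\lambda_0$, so the entries of $\rho_\lambda(g)-I$ do not vanish identically). Because $G$ is countable, the set of $\lambda$ at which faithfulness fails is a countable union of proper algebraic (hence measure-zero, indeed countable-or-thin) subsets of the curve, each defined by algebraic equations over $\Q$. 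Any transcendental $\lambda$ avoids all of these, since a transcendental number cannot satisfy a nontrivial polynomial over $\Q$.

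To make this precise I would proceed as follows. For each nontrivial $g\in G$, the condition $\rho_\lambda(g)=I$ forces (say) the top-right entry, a fixed rational function $P_g(\lambda)$ in $\lambda$ with coefficients algebraic over $\Q$ (after clearing the $z,\mu$ dependence using the relations $x=\lambda+\lambda^{-1}$, $z^2=(1+x^2)z-2x^2+1$, $\mu=(\lambda z-x)/(\lambda^2-1)$ as in the proof of Theorem 5.2), to vanish. Since $\rho_{\lambda_0}(g)\neq I$ at the algebraic point $\lambda_0$ giving the complete hyperbolic structure, $P_g$ is not the zero function, so $P_g$ has only finitely many roots, all algebraic over $\Q$. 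A transcendental $\lambda$ is a root of none of them, for any $g\neq e$; hence $\rho_\lambda(g)\neq I$ for all $g\neq e$, i.e.\ $\rho_\lambda$ is faithful.

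The hard part will be handling the square-root ambiguity in $z$ cleanly and confirming that the entries really are nonconstant (nonzero) rational functions of $\lambda$ rather than accidentally vanishing identically. The computation in the proof of Theorem 5.2 (where $z$ was eliminated from the trace of the longitude using the quadratic relation) already shows the right strategy: one works on the field extension $\Q(\lambda)[z]/(z^2-(1+x^2)z+2x^2-1)$, where every matrix entry lives, and the comparison point $\lambda_0$ (algebraic, giving the discrete faithful rep) guarantees non-vanishing. I expect no serious obstacle beyond organising this specialisation carefully and confirming that $\lambda_0$ is genuinely algebraic so that the transcendence of $\lambda$ does the separating.
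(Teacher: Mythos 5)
Your overall strategy --- show that for each $g\neq e$ the locus of parameters at which $\rho_\lambda(g)=I$ is cut out by equations over $\Q$ and is proper, then let the transcendence of $\lambda$ finish the job --- is in the right spirit, and the algebraic bookkeeping you describe (working in $\Q(\lambda)[z]/(z^2-(1+x^2)z+2x^2-1)$ and using the quadratic relation to eliminate the square-root ambiguity) is exactly the manipulation the paper performs. But the anchor of your argument fails: there is \emph{no} algebraic value $\lambda_0$ at which the matrices in (2) give the discrete faithful representation. In (2) the meridian $A$ is diagonal with eigenvalues $\lambda^{\pm 1}$ and $\lambda\neq 0,\pm 1$, so its trace $x=\lambda+\lambda^{-1}$ is never $\pm 2$ and $A$ is never parabolic; the discrete faithful representation, in which the meridian must be parabolic, is the separate degenerate case (1), sitting over the parameter value $x=2$ (that is, $\lambda=1$), where $\mu=(\lambda z-x)/(\lambda^2-1)$ has a pole and the parametrisation (2) breaks down. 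So you cannot evaluate your functions $P_g$ (built from matrix entries) at the discrete faithful point to certify $P_g\not\equiv 0$, and no other faithful member of the family (2) is available a priori --- that is precisely what the proposition is trying to establish.

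The repair, which is what the paper actually does, is to make the comparison at the level of traces rather than matrix entries: $\mathrm{tr}\,\rho(g)$ is an integer polynomial in $(x,z)$ and therefore extends over the whole curve $r(x,z)=0$, including the point $(2,2-\omega)$ carrying the discrete faithful character. Transcendence of $x$ then shows that if $\mathrm{tr}\,\rho_\lambda(g)=\pm 2$ for your transcendental $\lambda$, the trace of $g$ is constant on the curve, so $g$ also has trace $\pm 2$ in the discrete faithful representation. But this only tells you that $g$ is \emph{parabolic} there, hence conjugate into the peripheral $\Z\times\Z=\langle m,l\rangle$ --- not that $g$ is trivial. Your proposal is therefore missing an essential final step: one must check separately that no nontrivial $m^il^j$ is killed by $\rho_\lambda$. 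The paper does this by observing that $m$ and $l$ are both diagonal in the representations (2), so that trace $2$ would force $m^il^j=I$ identically in $\lambda$, which is refuted by evaluating at $\lambda=2$. Without this peripheral analysis the argument does not close.
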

\begin{proof}
On taking $\lambda$ to be any transcendental number, we have
that $x$ and $z$ will be transcendental too (for
either choice of $z$). Now it is well known that
the trace of any element in $\langle A,B\rangle$ is given by
a triple variable polynomial in the trace of $A$, of $B$, and of $AB$, 
having coefficients in $\Z$. Therefore if there is an element $g$ in $G$
which has trace $\pm 2$ for this value of $\lambda$ then we know that 
some polynomial $f$ in $\Z[u,v]/\langle r(u,v)\rangle$ is zero at 
$(u,v)=(x(\lambda),z(\lambda))$ for one of the choices of $z$,
where $r(u,v)$ is the irreducible
polynomial $v^2-(1+u^2)v+2u^2-1$. Using this relation
we can assume that $f(u,v)$ is of the form
$vp(u)+q(u)=0$ for $p,q\in\Z[u]$ which implies that
$q^2-p^2+2u^2p^2+(1+u^2)pq$ is zero when evaluated at $x$.
As $x$ is transcendental, this polynomial
must be identically zero and so 
$(pz+q)(pz-p(1+x^2)-q)=0$ holds for all values of $x$ and
$z$ satisfying $f(x,z)=0$. Thus it must be a multiple of $r(x,z)$, which 
is an irreducible degree 2 polynomial in $z$, giving
a contradiction unless $p=q=0$. Hence
the trace of $g$ is constant in all representations.
But on setting $x=2$ so that $z=(5\pm\sqrt{-3})/2=
2-\omega$, we have the faithful discrete representation in (1).
In this case we know
that the elements with trace $\pm 2$ 
can all be conjugated to lie in the 
$\Z\times\Z$ subgroup $\langle m(A,B),l(A,B)\rangle$. Now $m(A,B)=A$ and
$l(A,B)$ are both diagonal matrices in all other representations we are
considering. Hence if $m^il^j$ has (without loss of generality)
trace equal to 2 when $\lambda$ is transcendental, this element must be
the identity and hence the identity for any $\lambda$. But on putting
say $\lambda=2$ we find that this cannot hold unless $i=j=0$.
\end{proof}

\section{Embeddings of HNN extensions in $\slt$}

Having seen cases where we can embed free products amalgamated over $\Z$
in $\slt$ as long as the factors have this property, we now see that
this is very different with HNN extensions. This is perhaps not surprising,
given that if we form an HNN extension of $G$ with stable letter $t$ and
isomorphic
associated subgroups $A$ and $B$ then we introduce relations of the form
$tat^{-1}=b$ for $a\in A$ and $b\in B$, forcing $a$ and $b$ to have the
same trace.

We start with a result giving conditions on when HNN extensions do embed
in $\slt$, in analogy with Corollary 3.5.
\begin{thm} Suppose that $G$ is a countable subgroup of $\slt$ such that
no element of $G$ has trace $\pm 2$ except the identity (so that $G$ is a
CSA group by Corollary 2.3). Let $A\neq G$ be any centraliser in $G$ and
$B=gAg^{-1}$ any centraliser that is conjugate to $A$ by $g\in G$. 
Then the HNN extension
$\Gamma=G*_{tAt^{-1}=B}$, formed using the isomorphism from $A$ to $B$ which is
conjugation by $g$, embeds in $\slt$ and no non-identity element of
$\Gamma$ has trace equal to $\pm 2$.
\end{thm}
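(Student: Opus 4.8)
The plan is to construct an explicit faithful representation of the HNN extension $\Gamma=G*_{tAt^{-1}=B}$ into $\slt$ and verify that no non-identity element acquires trace $\pm 2$. First I would conjugate $G$ inside $\slt$ so that the centraliser $A$ becomes the group of diagonal matrices in $G$. Since $A\neq G$ is a centraliser and $G$ has no non-trivial elements of trace $\pm 2$, every non-identity element of $A$ is diagonalisable with distinct eigenvalues, and after conjugation $A$ consists of diagonal matrices $\mathrm{diag}(\alpha,\alpha^{-1})$. Because $B=gAg^{-1}$, the given isomorphism $A\to B$ is realised inside $\slt$ already by conjugation by the matrix $g$; the key observation is that the defining HNN relation $tat^{-1}=gag^{-1}$ for all $a\in A$ is satisfied in $\slt$ by any $t$ of the form $t=g\cdot c$ where $c$ commutes with every element of $A$, i.e.\ $c$ is itself a diagonal matrix. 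So the freedom in choosing $t$ is exactly a choice of one diagonal matrix $c=\mathrm{diag}(\nu,\nu^{-1})$.

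The strategy then is to pick the parameter $\nu$ transcendentally generic so that the HNN extension is faithfully realised. To run the normal-form argument I would lean on Lemma 2.6, which already tells us that primitive elements of $G$ stay primitive in $\Gamma$ and controls conjugacy of primitive elements; this encodes exactly the Britton's-lemma bookkeeping needed here. Concretely, any $\gamma\in\Gamma-G$ has a reduced form $\gamma_0t^{k_1}\cdots t^{k_n}\gamma_n$ with no pinch $ta^jt^{-1}$ or $t^{-1}b^jt$, and I would show that under the matrix assignment such a word cannot equal $I$ unless it is the empty word. The point is that conjugation by $t=gc$ moves the diagonal subgroup $A$ to $B$ but, because $c$ contributes a transcendental diagonal factor $\nu$, the entries of a non-trivial reduced word are non-trivial Laurent polynomials in $\nu$ over the field generated by the entries of $G$; choosing $\nu$ transcendental over that countable field forces these polynomials to be identically zero only for the trivial word. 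Faithfulness on $G$ itself is immediate since $G$ already embeds.

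For the trace condition I would argue that any $w\in\Gamma-\{I\}$ of trace $\pm 2$ would be either conjugate into $G$ — impossible by hypothesis since $G$ has no such elements — or genuinely involve the stable letter, in which case its trace is again a non-constant Laurent polynomial in $\nu$ and so cannot be the constant $\pm 2$ for transcendental $\nu$. Here I would invoke the familiar fact (used already in Proposition 5.3) that the trace of any word in fixed generators is a polynomial with integer coefficients in the traces of the generators and their products, so that trace $=\pm2$ is an algebraic condition on $\nu$ that a transcendental value evades unless the condition is identically satisfied; and the latter is ruled out by specialising $\nu$ to a value making $w$ plainly non-parabolic.

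The main obstacle I expect is the genericity bookkeeping: making precise that for a non-trivial reduced word the relevant matrix entry, or trace, is a Laurent polynomial in $\nu$ that is not identically zero (respectively not identically $\pm2$). This requires a careful induction on the syllable length $n$, tracking a leading power of $\nu$ coming from the outermost $t^{\pm k_n}$ and showing it cannot be cancelled, precisely because no pinch occurs and the diagonal subgroup $A$ is malnormal in $G$ (Lemma 2.2). This malnormality is what guarantees that the off-diagonal data carried across each $t$-syllable genuinely changes the $\nu$-degree, so the highest-degree term survives. Once that degree argument is set up, both faithfulness and the trace statement follow from the same transcendence-of-$\nu$ specialisation, mirroring the technique of Shalen's Proposition 3.2 and Corollary 3.5.
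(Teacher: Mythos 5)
Your proposal follows essentially the same route as the paper: reduce to a diagonal stable letter (your $t=gc$ with $c$ diagonal is exactly the paper's substitution $s=g^{-1}t$), introduce a transcendental parameter, and run an induction on syllable length showing that every entry, and the trace, of a reduced word involving $t$ is a Laurent polynomial in the parameter whose extreme-degree terms survive, so that transcendence settles both faithfulness and the absence of trace $\pm 2$.

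There is, however, one point where the justification you give for the crucial step would not work. You attribute the survival of the leading $\nu$-power across each $t$-syllable to the malnormality of the diagonal subgroup (Lemma 2.2). Malnormality is automatic for any subgroup of $\slt$ missing $-I$ and is not what the induction needs: what the inductive step actually requires is that every $g_i\in G-A$ has \emph{all four matrix entries nonzero}, so that when you multiply by $t^{n_{i}}g_i$ the products of extreme coefficients are nonzero and the top and bottom degrees genuinely increase. That is precisely Lemma 3.4 of the paper, and it is where the hypothesis that no non-identity element of $G$ has trace $\pm 2$ enters; malnormality alone does not exclude zero entries (for instance, in the embedding of $BS(1,m)$ in Section 2 the diagonal subgroup is malnormal by Lemma 2.2, yet $\bigl(\begin{smallmatrix}1&1\\0&1\end{smallmatrix}\bigr)$ lies outside it and has a zero entry, and there the degree bookkeeping would collapse). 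Replace the appeal to Lemma 2.2 by an appeal to Lemma 3.4 and your argument closes up and coincides with the paper's. (A smaller quibble: Lemma 2.6 is about primitivity in HNN extensions of groups with cyclic centralisers and is not needed here; the plain Britton normal form suffices.)
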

\begin{proof}
We are forming the group
\[\Gamma=\langle G,t|tat^{-1}=gag^{-1}\,\,\forall a\in A\rangle\]
which is isomorphic to the HNN extension
\[\langle G,s|sas^{-1}=a\,\,\forall a\in A\rangle\]
by setting $s=g^{-1}t$, so we can assume our HNN extension is formed using
the identity map.

We now take $a\in A-\{e\}$ and conjugate $G$ in $\slt$ so that $a$ is a
diagonal matrix with distinct eigenvalues. This means that
$A$ is now precisely the subgroup $D$ of diagonal
elements in $G$. Thus by Lemma 3.4 no element of $G-D$ has a
zero entry.
We set $t=\sma{cl}x&0\\0&x^{-1}\fma$ where $x\in\C$ is transcendental over
the coefficient field $\F$ generated by the elements of $G$. Now $t$
commutes with all elements of $A$ so we need to show that 
$\langle t,G\rangle\le\slt$ is a faithful representation of the HNN
extension $\Gamma$. By using normal forms, any $\gamma\in\Gamma$ which is
not a conjugate of an element of $G$ (or a power of $t$) 
can be conjugated so it
is of the form
\[\gamma=t^{n_1}g_1t^{n_2}g_2\ldots t^{n_r}g_r\mbox{ for }r\ge 1,
n_i\in\Z-\{0\}\]
and where each $g_i$ is in $G-A$.

Now each entry of $\gamma$ is a Laurent polynomial in $x$, involving
positive and negative powers, with coefficients in $\F$. We set
$n$ to be $|n_2|+\ldots+|n_r|$ which is greater than 0 for $r\geq 2$
as well as $S=n_1+n,D=n_1-n$
and we claim by induction on $r$ that
$\gamma$ must be of the form
\[\sma{ll}
a_+x^S+\ldots +u_+x^D&a_-x^S+\ldots +u_-x^D\\
c_+x^{-D}+\ldots +v_+x^{-S}&c_-x^{-D}+\ldots +v_-x^{-S}\fma\]
where $a_{\pm},c_{\pm},u_{\pm},v_{\pm}$ are all non zero elements of $\F$
and we have only written down the end terms (the highest and lowest powers
of $x$) of each Laurent polynomial. This is because on increasing $r$ by 1,
we obtain a new element by multiplying $\gamma$ above by
\[t^{n_{r+1}}g_{r+1}=\sma{ll}\alpha_+x^m&\beta_+x^m\\
\alpha_-x^{-m}&\beta_-x^{-m}\fma\] 
where we have set $m$ equal to $n_{r+1}$ and
$\alpha_\pm,\beta_\pm$ are the entries of $g_{r+1}$ so are again
non zero elements of $\F$.

Thus 
our new element $t^{n_1}g_1t^{n_2}g_2\ldots t^{n_r}g_rt^{n_{r+1}}g_{r+1}$
is equal to
\[\sma{ll}a_\pm\alpha_\pm x^{S+|m|}+\ldots+u_\mp\alpha_\mp x^{D-|m|}&
a_\pm\beta_\pm x^{S+|m|}+\ldots+u_\mp\beta_\mp x^{D-|m|}\\
c_\pm\alpha_\pm x^{-D+|m|}+\ldots+v_\mp\alpha_\mp x^{-S-|m|}&
c_\pm\beta_\pm x^{-D+|m|}+\ldots+v_\mp\beta_\mp x^{-S-|m|}\fma\]
where we take the upper signs throughout if $m>0$ and the lower for $m<0$.
Hence the induction is
established and we see that each entry is a Laurent polynomial in $x$ with
more than one non zero term, so each entry is transcendental. Moreover the
trace of $\gamma$ is a Laurent polynomial of the form
$a_+x^{n_1+n}+\ldots+v_-x^{-n_1-n}$ if $n_1>0$ and
$c_-x^{-n_1+n}+\ldots+u_+x^{n_1-n}$ otherwise, thus all traces of elements
outside $G$ are transcendental.
\end{proof}

Consequently Theorem 6.1 can be applied iteratively, thus allowing repeated 
HNN extensions which will always result in a CSA group that embeds in
$\slt$. As pointed out by H.\,Wilton, this gives a quick and explicit
proof that all limit groups embed in $\slt$: an free extension of centralisers
of a group $G$ is the group $G*_C(C\times A)$ where $C$ is the centraliser
of an element in $G$ and $A=\Z^k$ is a finitely generated free abelian
group. Theorem 4 of \cite{km98b}, with an alternative proof given in 
\cite{chmp} as Theorem 4.6, states that a finitely generated group is a
limit group if and only if it is a subgroup of a group
formed by a finite number of
repeated free extensions of centralisers, starting from a 
finitely generated free group.
However we can write
\[G*_C(C\times\Z^2)=(G*_C(C\times\Z))*_{C\times\Z}
((C\times\Z)\times\Z)\]
and so on. Although $C\times\Z$ need not be a centraliser in the new group
$\Gamma=G*_C(C\times\Z)$, this is true if $C$ is malnormal in $G$ and then 
$C\times\Z$ is
malnormal in $\Gamma$. Therefore if $G$ is a CSA group
we need only apply a rank 1 
free extension of centralisers $k$ times to go from $G$ to 
$G*_C(C\times \Z^k)$. But a rank 1 extension is simply an HNN extension
$G*_{tCt^{-1}=C}$ using the identity map, so on starting with an embedding
of a free group and applying Theorem 6.1 repeatedly then dropping to a
subgroup, we have that any limit group embeds in $\slt$. This result was
also obtained in \cite{fr} using similar techniques but they only
considered the case of hyperbolic limit groups.  

The HNN extensions permitted in Theorem 6.1 can only identify two conjugate
maximal abelian subgroups and can only use a conjugation map as the 
identifying isomorphism, which is very restrictive. However the next
Proposition, which is really just a restatement of the point mentioned in
Section 2 that the Klein bottle group does not embed in $\slt$, shows
that such restrictive conditions are required.
\begin{prop}
Let $G$ embed in $\slt$ and let $\Gamma$ be an HNN extension of $G$ with
stable letter $t$ which
identifies two isomorphic subgroups $A,B$ of $G$ using
an isomorphism $\theta$. If there is $a\in A$ (where $a\neq\pm I$)
and $b\in B$ with
$\theta(a)=b$ such that $b$ is conjugate in $G$ to
$a^{-1}$ then $\Gamma$ does not embed in $\slt$.
\end{prop}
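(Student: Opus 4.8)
The plan is to show that the relation $tat^{-1}=b$ together with $b$ being conjugate to $a^{-1}$ forces a Klein-bottle-type subgroup into $\Gamma$, which cannot embed in $\slt$ by the CT obstruction from Lemma 2.1. First I would write out the hypothesis concretely: we have $a\in A$ with $a\neq\pm I$, and there is some $h\in G$ with $b=hah^{-1}\dots$ no, with $b$ conjugate to $a^{-1}$, say $b=ha^{-1}h^{-1}$ for some $h\in G$. Since $\theta(a)=b$, the defining HNN relation gives $tat^{-1}=b=ha^{-1}h^{-1}$ inside $\Gamma$. Rearranging, the element $s=h^{-1}t$ satisfies $sas^{-1}=a^{-1}$, so inside $\Gamma$ we have produced an element $s$ and an infinite-order element $a$ (infinite order because $a\neq\pm I$ lies in $\slt\dots$ one should note $a$ has infinite order since the only finite-order diagonalizable-over-$\{\pm2\}$ obstruction is torsion, but $G$ embeds in $\slt$ so $a$, being $\neq\pm I$, generates an infinite cyclic or larger group) with $sas^{-1}=a^{-1}$.

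The key step is then to identify the subgroup $\langle a,s\rangle$ as (a quotient-free copy of) the Klein bottle group $BS(1,-1)=\langle x,s\mid sxs^{-1}=x^{-1}\rangle$, or at least to extract from it the non-CT configuration. The cleanest route is to argue directly rather than to name the group: $s^2$ commutes with $a$ because $s^2 a s^{-2}=s(a^{-1})s^{-1}=(a^{-1})^{-1}=a$, so $a\in C_\Gamma(s^2)$. Meanwhile $s$ does not commute with $a$ (as $sas^{-1}=a^{-1}\neq a$, using $a\neq a^{-1}$, which holds since $a\neq\pm I$ implies $a^2\neq I$ as $a$ has infinite order in $\slt$). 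This is precisely a failure of commutative transitivity: $s$ commutes with $s^2$, and $s^2$ commutes with $a$, but $s$ does not commute with $a$, while all three are non-identity. Hence $\Gamma$ is not CT, and since any subgroup of $\slt$ not containing $-I$ is CT, $\Gamma$ cannot embed in $\slt$.

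The main obstacle I anticipate is the bookkeeping needed to guarantee the elements involved are genuinely non-identity and that $a$ really has infinite order, so that $a\neq a^{-1}$ and $s^2\neq e$; one must rule out the degenerate possibility that $a$ has finite order (which would have to be even order $2$ in $\slt$, forcing $a=-I$, excluded by hypothesis) and that $s^2=e$ (which would make $s$ of order $2$, again forcing a $\pm I$ relation that one checks is impossible in a faithful $\slt$ representation). These are exactly the same kind of torsion-freeness observations used for $BS(1,-1)$ in Section 2, so I would simply invoke that discussion: the argument that $BS(1,-1)$ fails to embed because it is torsion-free, non-abelian with $s^2$ central applies verbatim once the subgroup $\langle a,s\rangle$ is recognised. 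Thus the whole proof reduces to the substitution $s=h^{-1}t$ and the observation that it reproduces the Klein bottle relation, after which Lemma 2.1 finishes it off.
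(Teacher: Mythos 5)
Your first step coincides with the paper's: substituting $s=g^{-1}t$ turns the relation $tat^{-1}=ga^{-1}g^{-1}$ into $sas^{-1}=a^{-1}$, and $s$ has infinite order in $\Gamma$ because any word representing the identity in an HNN extension has zero exponent sum in the stable letter. From there you diverge, and your route has a genuine gap. The paper argues with matrices: if $S\in\slt$ satisfies $SAS^{-1}=A^{-1}$ for some $A\neq\pm I$, then $S$ must have trace $0$, so $S^2=-I$ and $S^4=I$; a faithful representation would therefore force $s^4=e$ in $\Gamma$, contradicting the exponent-sum count. You instead show $\Gamma$ is not CT (correct: $s^2\neq e$ commutes with both $s$ and $a$, while $s$ and $a$ do not commute because $a^2\neq I$ gives $a\neq a^{-1}$) and then invoke ``subgroups of $\slt$ not containing $-I$ are CT''. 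The gap is that parenthetical clause. The hypotheses only assume $G$ embeds in $\slt$, so $G$, and hence $\Gamma$, may contain $-I$ or other torsion --- the explicit exclusion $a\neq\pm I$ in the statement signals that $-I\in G$ is permitted --- and a subgroup of $\slt$ containing $-I$ need not be CT. Worse, the matrix computation shows that any would-be embedding necessarily sends $s^2$ to $-I$, so the image \emph{would} contain $-I$ and your obstruction evaporates exactly where it is needed: the configuration $sas^{-1}=a^{-1}$ with $s^2$ central is perfectly realisable inside $\slt$, for instance by $A=\sma{cl}\lambda&0\\0&\lambda^{-1}\fma$ and $S=\sma{rr}0&1\\-1&0\fma$; what cannot be realised is such an $s$ of infinite order, and that is the point your argument misses.

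Two smaller corrections. The claim that $a\neq\pm I$ gives $a$ infinite order in $\slt$ is false (e.g.\ $\mathrm{diag}(i,-i)$ has order $4$); what is true, and all you need for $a\neq a^{-1}$, is that $a^2=I$ forces $a=\pm I$ in $\slt$. Also $s^2\neq e$ should be justified by the exponent-sum argument inside the abstract group $\Gamma$, not by anything about matrices. If you add the hypothesis that $G$ is torsion free --- which covers every use the paper makes of this proposition, since it is applied to graphs of free groups --- then $\Gamma$ is torsion free, any embedding misses $-I$, and your CT argument closes up correctly as a legitimate alternative proof. For the proposition as actually stated, you need the order-four matrix argument.
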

\begin{proof} We have $tat^{-1}=ga^{-1}g^{-1}$ holding in $\Gamma$ and if
this held in $\slt$ then the matrix $g^{-1}t$ could only have order 4,
but in any HNN extension an element which is equal to the identity must have 
zero exponent sum in the stable letter. 
\end{proof}
In particular, although we mentioned in Section 3 that a graph of non
abelian free groups with maximal cyclic edge groups embeds in $\slt$ if
the graph is a tree, we see that this need not be true for graphs with
loops.

All of the HNN extensions in Proposition 6.2 which fail to embed in
$\slt$ contain $\Z\times\Z=\langle a^2,g^{-1}t\rangle$, at least if $a$ is of
infinite order, and so will not be word hyperbolic. We 
now give an example of a graph of non
abelian free groups with maximal cyclic edge groups which is word hyperbolic
but which does not embed in $\slt$.
We have a result in \cite{bfcr} giving conditions on when an HNN extension
over a virtually cyclic group is word hyperbolic, alongside the theorem
already mentioned in \cite{bf} for amalgamated free products. For a torsion
free word hyperbolic group $G$, this states that if $A,B$ are infinite
cyclic subgroups of $G$ then  
an HNN extension formed by identifying $A$ and $B$ is word hyperbolic
if and only if for all $g\in G$ we have
$gAg^{-1}\cap B=\{e\}$ and where at least one of $A$ and $B$ is generated
by a primitive element. This is also equivalent to saying that the HNN
extension does not contain a Baumslag Solitar subgroup.
(This result is false if the primitive condition is
removed, as was originally stated in \cite{bf}, hence necessitating the
appearance of \cite{bfcr}.) Once again we cannot use this result
repeatedly as it stands but if $A$ and $B$ are both generated by primitive
elements then we can take multiple HNN extensions and still obtain word
hyperbolicity by this result and Lemma 2.6. We do this
until too many elements are forced to be conjugate to each 
other.    
\begin{prop}
Let $F_2$ be free on $a,b$ and let $\Gamma$ be the triple HNN extension
formed using stable letters $t,s,r$ so that $tat^{-1}=b,sas^{-1}=ab$
and $rar^{-1}=aba^{-1}b^{-1}$. Then $\Gamma$ does not embed in $\slt$
but is a graph of non
abelian free groups with maximal cyclic edge groups which is word hyperbolic.
\end{prop}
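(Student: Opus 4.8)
The proof splits into two independent parts. For the graph-of-groups and hyperbolicity statement I would realise $\Gamma$ as a tower of single HNN extensions
\[
F_2=\Gamma_0\le\Gamma_1\le\Gamma_2\le\Gamma_3=\Gamma,
\]
adjoining $t$, then $s$, then $r$. Each stage identifies the maximal cyclic subgroup $\langle a\rangle$ with one of $\langle b\rangle,\langle ab\rangle,\langle[a,b]\rangle$. Since $a,b,ab$ are primitive in $F_2$ (their images $(1,0),(0,1),(1,1)$ are primitive in $\Z^2$) and the commutator $[a,b]$ is not a proper power, all four edge groups are maximal cyclic, so $\Gamma$ is the fundamental group of a graph with a single non-abelian free vertex group $F_2$ and three maximal cyclic loop edges. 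To obtain hyperbolicity I would apply the HNN criterion of \cite{bfcr} at each stage: an HNN extension of a torsion free word hyperbolic group over infinite cyclic $A,B$ is hyperbolic precisely when $gAg^{-1}\cap B=\{e\}$ for all $g$ and at least one of $A,B$ is primitive. The base $F_2$ is hyperbolic, and each $\Gamma_i$ is torsion free hyperbolic, hence has infinite cyclic centralisers, so Lemma 2.6 applies to it.

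The only thing to verify at each stage is that the two cyclic subgroups being identified are non-conjugate in the current group, i.e. $a\not\sim b^{\pm1}$ in $F_2$, then $a\not\sim(ab)^{\pm1}$ in $\Gamma_1$, then $a\not\sim[a,b]^{\pm1}$ in $\Gamma_2$. I would establish these by descending the tower with the conjugacy clause of Lemma 2.6, which turns a conjugacy in a one-step HNN extension into a conjugacy in the base, together with the special option that one element is conjugate to the $A$-generator and the other to the $B$-generator of that step. Iterating this all the way down, every option reduces to a non-conjugacy in $F_2$ among $a,b,ab,[a,b]$ and their inverses, and each such non-conjugacy is visible from the distinct abelianisation images $(1,0),(0,1),(1,1),(0,0)$. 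Hence at no stage is a Baumslag-Solitar subgroup created, and $\Gamma$ is word hyperbolic.

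For non-embedding, suppose $\Gamma\le\slt$. The restriction to $F_2=\langle a,b\rangle$ is faithful, so $a$ has infinite order and $a,b$ do not commute. The three defining relations force $a\sim b\sim ab\sim[a,b]$ in $\Gamma$, so these four matrices share a common trace $p$. Feeding $\operatorname{tr}(a)=\operatorname{tr}(b)=\operatorname{tr}(ab)=p$ into the Fricke identity
\[
\operatorname{tr}([a,b])=(\operatorname{tr}(a))^2+(\operatorname{tr}(b))^2+(\operatorname{tr}(ab))^2-\operatorname{tr}(a)\operatorname{tr}(b)\operatorname{tr}(ab)-2
\]
and equating the result to $\operatorname{tr}([a,b])=p$ gives $p^3-3p^2+p+2=(p-2)(p^2-p-1)=0$, so $p\in\{2,\tfrac{1+\sqrt5}{2},\tfrac{1-\sqrt5}{2}\}$.

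It then remains to exclude each root. If $p=2$ then $\operatorname{tr}([a,b])=2$ forces $\langle a,b\rangle$ to be reducible; conjugating it upper triangular, the trace-$2$ generators become unipotent and hence commute, contradicting faithfulness. If instead $p^2=p+1$, I would run the recursion $\operatorname{tr}(a^{n+1})=p\operatorname{tr}(a^n)-\operatorname{tr}(a^{n-1})$ from $2,p$; using $p^2=p+1$ it collapses to $\operatorname{tr}(a^5)=-2$, whence $a^5=-I$ (as $p\neq\pm2$, $a$ is diagonalisable), so $a$ has order $10$, again contradicting infinite order. Thus no embedding exists. The main subtlety is that the trace equalities alone do not give a contradiction: they only pin $p$ to the three-element root set. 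The real content is the separation of cases, namely that the two golden-ratio roots are exactly the traces $2\cos(\pi/5),2\cos(3\pi/5)$ of finite-order elliptics (exposed cleanly by $a^5=-I$), while the root $p=2$ leaves $a$ parabolic and so must instead be killed by the reducibility argument.
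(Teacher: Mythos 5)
Your proposal is correct and follows essentially the same route as the paper: the same Fricke--Klein trace identity yielding $(p-2)(p^2-p-1)=0$, with $p=2$ excluded by reducibility and the golden-ratio roots by finite order, and the same combination of the Bestvina--Feighn HNN criterion with Lemma 2.6 applied stage by stage for hyperbolicity. You merely supply more detail than the paper does (the explicit $\operatorname{tr}(a^5)=-2$ computation and the conjugacy bookkeeping via abelianisation), which is welcome but not a different argument.
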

\begin{proof}
If $\Gamma$ did embed in $\slt$ then $a$ and $b$ would generate a rank two
free group where $a,b,ab$ and $aba^{-1}b^{-1}$ all have the same trace,
$z\in\C$ say.
But using the well known trace identities in $\slt$
which go back to Fricke and
Klein, we have that
\[ \mbox{tr}^2(a) +\mbox{tr}^2(a)+\mbox{tr}^2(ab)-2
= \mbox{tr}(a)\; \mbox{tr}(b)\; \mbox{tr}(ab) \; +\mbox{tr}(aba^{-1}b^{-1})\]
so $(z-2)(z^2-z-1)=0$, but $z=2$ would imply that $\langle a,b\rangle$ is
metabelian and we have elements of order 5 otherwise.

Now $\Gamma$ is formed using a graph of groups consisting of one vertex
representing $F_2$ and three loops for the three pairs of edge groups, all
of which are maximal cyclic in $F_2$. To see that $\Gamma$ is word 
hyperbolic, we apply Bestvina and Feighn's result above each time in 
conjunction with Lemma 2.6, noting that $a,b,ab,aba^{-1}b^{-1}$ are non
conjugate primitive elements of $F_2$.
\end{proof}

We finish this section by mentioning some recent work by Hsu and Wise
on the linearity of graphs of free groups with infinite cyclic edge
groups. In \cite{hsw} it was shown that if such a group is word
hyperbolic then it is linear over $\Z$, although one would expect the
dimension to be very high. Now
the Bestvina-Feighn results tell us that such a group is
word hyperbolic if and only if it does not contain any Baumslag Solitar
subgroup.
Let us call a Baumslag Solitar group $BS(m,n)$ Euclidean if
$|n|=|m|$ and non Euclidean otherwise. 
Then a non Euclidean Baumslag Solitar group cannot be
linear over $\Z$ (as either it fails to be residually finite or it
is soluble but not polycyclic). However the Euclidean ones are linear over
$\Z$ (as they have a finite index subgroup $F_n\times\Z$) so it is possible
that the necessary condition of not containing a non Euclidean 
Baumslag Solitar subgroup
is sufficient to make a group $G$ which splits as a
graph of free groups with infinite cyclic edge
groups be linear over $\Z$. 
Although this is currently open, the paper does show that $G$ is the 
fundamental group of a compact non positively curved cube complex 
if this condition holds.

\section{Minsky's simple loop question}
Suppose that $S$ and $T$ are both closed orientable surfaces of genus 
at least two. Theorem 2.1 in \cite{gab} showed that if $f:S\rightarrow T$ is
a continuous map such that the induced homomorphism $f_*:\pi_1(S)
\rightarrow\pi_1(T)$ is not injective then there exists a non
contractible simple closed curve $\alpha$ in $S$ such that $f(\alpha)$ is
a homotopically trivial closed curve in $T$. Note that any homomorphism
$\theta:\pi_1(S)\rightarrow\pi_1(T)$ is induced by a continuous map from
$S$ to $T$ because both surfaces are aspherical. This demonstrates that
the simple closed curves on a surface form a special subset of all closed
curves as the kernel of any non injective homomorphism to the fundamental
group of another surface must meet this subset. We can try to generalise
this result by replacing the codomain $\pi_1(T)$ with other groups.
Indeed Problem 3.96 in \cite{kir} which is
entitled the simple loop conjecture for
3-manifolds asks: Let $f:S\rightarrow M^3$ be a 2 sided immersion of a
closed surface $S$ in a 3-manifold $M^3$ 
such that $f_*$ is not injective then does there exist an
essential simple loop on $S$ whose image is null homotopic in $M$?
The answer is known to be yes
for Siefert fibred spaces but the question is still open for
$M^3$ an orientable hyperbolic 3-manifold.
In this case $\pi_1(M^3)$ will be a discrete subgroup of $\slt$ so a variation
is the following problem by Minsky. Question 5.3 of \cite{min}
asks whether all non injective homomorphisms
from $\pi_1(S)$ to $\slt$ have an element in the kernel which is
represented by a non contractible simple closed curve on $S$.
Of course if this were true then it would apply both when the image is
discrete and when indiscrete, thus solving the above open question.
However we have seen in this paper many examples of subgroups of $\slt$
which are not isomorphic to discrete groups, so it should not be too
surprising if counterexamples did exist to Minsky's question.
(In fact Minsky asked this question for $P\slt$ but a homomorphism to
$\slt$ with image missing $-I$ will work here too.)

Indeed there are counterexamples to this question, as shown in \cite{cm}
using character varieties, in \cite{ll} by obtaining homomorphisms from
surface groups whose images are limit groups and thus embed in $\slt$, and
a short proof in \cite{clsl} using stable commutator length. In fact our
approach has aspects in common with the last two papers, in that we also
use a theorem of Hempel from 1990 which is quoted in \cite{ll} and, as in
\cite{clsl}, we are considering cyclic amalgamations. However we need
only apply an argument in Section 3 about when these groups embed in $\slt$, 
along with this result of Hempel and the work of Whittemore from 1973 (on
the $\slt$ representations of the fundamental group of the figure eight knot 
complement mentioned in Section 5) in order to obtain a very quick and
explicit proof. 
\begin{thm}
Let $\pi_1(S_2)=\langle A,B,C,D|ABA^{-1}B^{-1}=CDC^{-1}D^{-1}\rangle$
be the fundamental group of the closed orientable surface of genus two
and let $N$ be the normal closure of the element
$r(A,B)=B^{-1}A^{-1}BAB^{-1}ABA^{-1}B^{-1}A$ in $\pi_1(S_2)$. Then
$N$ is non trivial but does not contain any element which is
represented by a simple closed curve on $S_2$. However $\pi_1(S_2)/N$ embeds
in $\slt$ and misses $-I$.
\end{thm}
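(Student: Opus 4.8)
The plan is to realise $\pi_1(S_2)/N$ as a concrete amalgam and then quote the machinery of Section 3. Writing $\pi_1(S_2)=\langle A,B\rangle *_{[A,B]=[C,D]}\langle C,D\rangle$ as the splitting along the separating curve, the relator $r(A,B)$ lies in the free factor $\langle A,B\rangle$, and the amalgamating element $[A,B]$ keeps infinite order in $\langle A,B\mid r(A,B)\rangle$, which is the figure $8$ knot group $\Phi$ (a torsion free hyperbolic $3$-manifold group). Hence
\[\pi_1(S_2)/N=\Phi *_{\langle[A,B]\rangle=\langle[C,D]\rangle}F(C,D).\]
Part one is then immediate: $r(A,B)$ is a non trivial word of $\langle A,B\rangle$, so it is non trivial in $\pi_1(S_2)$ and $N\neq\{e\}$.

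For the embedding I would first compute the amalgamating element inside $\Phi$ using the punctured torus bundle presentation $\langle t,a,b\mid tat^{-1}=aba,tbt^{-1}=ba\rangle$ from Section 5, where $t=A$ and $B=at$; a one line reduction gives $[A,B]=tat^{-1}a^{-1}=ab$. I claim $ab$ is primitive in $\Phi$. It is not a proper power, since any root would have trivial image in $\Phi/\langle a,b\rangle=\Z$ and so lie in the fibre $\langle a,b\rangle$, where $ab$ is a basis element; and its centraliser is cyclic rather than the peripheral $\Z^2$, because a conjugate of $ab$ stays in the fibre and keeps non zero image in $H_1(\langle a,b\rangle)=\Z^2$, whereas the fibre meets the cusp group only in $\langle[a,b]\rangle$. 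On the other side $[C,D]$ is a commutator in $F(C,D)$, hence primitive. By Proposition 5.3 the representation (2) with transcendental $\lambda$ embeds $\Phi$ with no non identity element of trace $\pm2$ (so $ab$ has transcendental trace), and a generic embedding gives $F(C,D)$ the same property; Corollary 3.5 then embeds $\Phi*_{ab=[C,D]}F(C,D)=\pi_1(S_2)/N$ in $\slt$. As an amalgam of torsion free groups over $\Z$ this group is torsion free, so the faithful image has no element of order $2$ and in particular misses $-I$.

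The heart of the theorem is the absence of simple closed curves in $N$, and this is where I expect the main obstacle. I would split a putative simple closed curve $\gamma\in N$ by its intersection with the separating curve $\delta\simeq[A,B]$. If $\gamma$ is disjoint from $\delta$ it lies in one one-holed torus: on the $\langle C,D\rangle$ side the quotient map is injective, so the image of $\gamma$ is non trivial; on the $\langle A,B\rangle$ side $\gamma$ is either boundary parallel (mapping to $[C,D]^{\pm1}\neq e$) or a slope, that is primitive in $\langle A,B\rangle$, and a primitive element cannot lie in $N\cap\langle A,B\rangle=\ker(\langle A,B\rangle\to\Phi)$, since killing it would force $\Phi$ to be a quotient of $\Z$. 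The difficulty is the curves crossing $\delta$, where $\gamma$ has a non trivial alternating normal form in the amalgam; ruling out that these die is exactly the delicate point. To handle them uniformly I would invoke the theorem of Hempel (1990) quoted in \cite{ll}, whose hypothesis is that $r(A,B)$ itself is not represented by a simple closed curve on $S_2$. I verify this by noting that $r$, being supported in the one-holed torus, would have to be primitive in $\langle A,B\rangle$ to be simple there, but a primitive relator would make $\Phi$ cyclic. Thus Hempel's intersection calculus supplies precisely the control over which simple curves can sit in a normal closure, completing the argument.
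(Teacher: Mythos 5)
Your proposal is correct and follows essentially the same route as the paper: realise $\pi_1(S_2)/N$ as the figure-eight knot group amalgamated with $F(C,D)$ over $[A,B]=[C,D]$, check primitivity and transcendental traces so that Corollary 3.5 and Proposition 5.3 apply, and dispose of simple closed curves in $N$ via Hempel's 1990 theorem once $r(A,B)$ is known not to be simple (the paper checks this by drawing the curve, you by the slope/boundary-parallel classification on the one-holed torus --- either works, and your preliminary case analysis of curves disjoint from the separating curve is subsumed by Hempel anyway). The one slip is the parenthetical ``no non-identity element of trace $\pm 2$, so $ab$ has transcendental trace'': that implication does not hold in general, and the correct justification is the paper's computation that $\mathrm{tr}([A,B])=z-1$ with $z$ transcendental when $\lambda$ is.
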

\begin{proof}
The group $\pi_1(S_2)$ is of course formed by amalgamating the free group
on $A,B$ with the free group on $C,D$ over $ABA^{-1}B^{-1}=CDC^{-1}D^{-1}$
and $r(A,B)$ is clearly non trivial in the first free group which injects
into $\pi_1(S_2)$. However by drawing the curve $r(A,B)$ on $S_2$ 
using the standard identification of the fundamental domain we see
that it is certainly not simple. Now a result in \cite{hemscc} states that
if $S$ is a closed orientable surface and $\alpha\in\pi_1(S)$ is a closed
curve such that the normal closure of $\alpha$ in $\pi_1(S)$ contains a
non trivial simple closed curve $\beta$ then $\alpha$ is itself a simple 
closed curve and $\beta^{\pm 1}$ is homotopic either to $\alpha$ or to
the commutator of $\alpha$ with some simple closed curve $\gamma$ meeting
$\alpha$ transversely in a single point. An immediate consequence of
this in our case is that as the closed curve $r(A,B)\in\pi_1(S_2)$ is not
simple, $N$ contains no non trivial simple closed curves.

We just need to show that $\pi_1(S_2)/N$ embeds in $\slt$. But rather
than think of this as taking the free group on $A,B,C,D$ and adding first
the surface relation, then the relation given by $r(A,B)=I$, we can do this
the other way around. Of course the word $r(A,B)$ has been seen before in
Section 5 where $G=\langle A,B|r(A,B)\rangle$ was the fundamental group
of the figure eight knot complement. Hence $\pi_1(S_2)/N$ is also an
amalgamation of this group $G$ and the free group $F$ on $C,D$
formed by identifying $ABA^{-1}B^{-1}$ in $G$ with $CDC^{-1}D^{-1}$ in $F$.
We can now apply Corollary 3.5 to show that $\pi_1(S_2)$ embeds in $\slt$:
it is clear
that $CDC^{-1}D^{-1}$ is a primitive element of $F$ and
we can embed a rank 2 free group in $\slt$ so that every non trivial
element has a transcendental trace. As for $ABA^{-1}B^{-1}$ in $G$,
this is the element $ab$ when written in terms of the generators $t,a,b$
giving the fibred form of the figure eight knot. Since the fibre subgroup
is free on $a,b$ we have that $ABA^{-1}B^{-1}$ lies in this subgroup and
is a primitive element of it. Moreover the centraliser $C_G(ABA^{-1}B^{-1})$
in $G$ is also infinite cyclic, because $ABA^{-1}B^{-1}$ has trace
$z-1$ which is not equal to $\pm 2$ in the discrete faithful representation 
of $G$ in $\slt$ where $x=2$ and $z=2-\omega$. 
Thus $ABA^{-1}B^{-1}$ must also be primitive in $G$ 
because any element of $G$ outside the fibre subgroup has all its non
trivial powers outside of this fibre subgroup. 
Thus on taking $\lambda$ to be
any transcendental number as in Proposition 5.3, we have an embedding
of $G$ in $\slt$ in which the primitive
element $ABA^{-1}B^{-1}$ has transcendental trace, so the
conditions of Corollary 3.5 are satisfied.

Finally as $\pi_1(S_2)/N$ is an amalgamated free product of two torsion
free groups, it is also torsion free and so misses $-I$ in $\slt$, implying
that it embeds in $P\slt$ too. 
\end{proof}


\begin{thebibliography}{99}

\bibitem{ata} V.\,S.\,Atabekyan,
{\it On subgroups of free Burnside groups of odd exponent 
$n\geq 1003$}, Izv. Math. {\bf 73} (2009), 861--892.

\bibitem{ba67} B.\,Baumslag,
{\it Residually free groups},
Proc. London Math. Soc. {\bf 17} (1967), 402--418.

\bibitem{bf} M.\,Bestvina and M.\,Feighn,
{\it A combination theorem for negatively curved groups},
J. Differential Geom. {\bf 35} (1992), 85--101.

\bibitem{bfcr} M.\,Bestvina and M.\,Feighn,
{\it Addendum and correction to
``A combination theorem for negatively curved groups''},
J. Differential Geom. {\bf 43} (1996), 783--788.

\bibitem{btnx} J.\,O.\,Button,
{\it A 3-manifold group which is not four dimensional linear}, (2012)\\
\texttt{http://arxiv.org/abs/1210.3068}

\bibitem{cal} D.\,Calegari, {\it scl}, MSJ Memoirs, 20. Mathematical
Society of Japan, Tokyo, 2009.

\bibitem{clsl} D.\,Calegari,
{\it Certifying incompressibility of non-injective surfaces with scl}, (2011)\\
\texttt{http://arxiv.org/abs/1112.1791}

\bibitem{chmp} C.\,Champetier and V.\,Guirardel,
{\it Limit groups as limits of free groups},
Israel J. Math. {\bf 146} (2005), 1--75.

\bibitem{cm} D.\,Cooper and J.\,F.\,Manning,
{\it Non-faithful representations of surface groups}, (2011)\\
\texttt{http://arxiv.org/abs/1104.4492}

\bibitem{cul} M.\,Culler,
{\it Lifting representations to covering groups}, Adv. in Math. {\bf 59}
(1986), 64--70. 

\bibitem{fr} B.\,Fine and G.\,Rosenberger,
{\it A note on faithful representations of limit groups},
Groups Complex. Cryptol. {\bf 3} (2011), 349-355.

\bibitem{gab} D.\,Gabai,
{\it The simple loop conjecture},
J. Diff. Geom. 21 (1985), 143--149.

\bibitem{gkm} D.\,Gildenhuys, O.\,Kharlampovich and A.\,Myasnikov,
{\it CSA-groups and separated free constructions},
Bull. Austral. Math. Soc. {\bf 52} (1995), 63--84.

\bibitem{gdwl} C.\,Gordon and H.\,Wilton,
{\it On surface subgroups of doubles of free groups},
J. Lond. Math. Soc. {\bf 82} (2010), 17--31.

\bibitem{hem} J.\,Hempel, {\it 3-manifolds}, Ann. of Math. Studies No 86,
Princeton University Press, Princeton, N.\,J.\,, 1976. 

\bibitem{hemscc} J.\,Hempel,
{\it One-relator surface groups},
Math. Proc. Cambridge Philos. Soc. {\bf 108} (1990), 467--474.

\bibitem{hsw} T.\,Hsu and D.\,T.\,Wise,
{\it Cubulating graphs of free groups with cyclic edge groups},
Amer. J. Math. {\bf 132} (2010), 1153--1188.

\bibitem{ks} A.\,Karrass and D.\,Solitar,
{\it The free product of two groups with a malnormal amalgamated subgroup},
Canad. J. Math. {\bf 23} (1971), 933--959.

\bibitem{km98b} O.\,Kharlampovich and A.\,Myasnikov,
{\it Irreducible affine varieties over a free group. II. Systems
in triangular quasi-quadratic form and description of residually free
groups}, J. Algebra {\bf 200} (1998), 517--570.

\bibitem{kir} R.\,Kirby,
{\it Problems in low-dimensional topology}, 
AMS/IP Stud.Adv. Math. 2.2, Geometric Topology, Athens, GA, 1993, 35--473.

\bibitem{liu} Yi Liu,
{\it Virtual cubulation of nonpositively curved graph manifolds}, (2011)\\
\texttt{http://arxiv.org/abs/1110.1940}

\bibitem{ll} L.\,Louder,
{\it Simple loop conjecture for limit groups}, (2011)\\
\texttt{http://arxiv.org/abs/1106.1350}

\bibitem{lubs} A.\,Lubotzky and D.\,Segal, {\it Subgroup growth}.
Progress in Mathematics 212, Birkha\"user Verlag, Basel, 2003.

\bibitem{lyscz} R.\,C.\,Lyndon and M.\,P.\,Sch\"utzenberger,
{\it The equation $a^M=b^Nc^P$ in a free group}, Michigan Math. J.
{\bf 9} (1962), 289--298. 

\bibitem{min} Y.\,N.\,Minsky, {\it Short geodesics and end invariants}.
In M.\, Kisaka and S.\,Morosawa, editors,
{\it Comprehensive research on complex dynamical systems and related
fields}, 1153 RIMS K\^oky\^uroku, 1-19, 2000.

\bibitem{nis} V.\,L.\,Nisnevi\u{c},
{\it \"Uber Gruppen, die durch Matrizen \"uber einem kommutativen Feld
isomorph darstellbar sind}, Rec. Math. [Mat. Sbornik] N.S. {\bf 8 (50)}
(1940), 395--403.

\bibitem{ppw} P.\,Przytycki and D.\,T.\,Wise,
{\it Mixed 3-manifolds are virtually special}, (2012)\\
\texttt{http://arxiv.org/abs/1205.6742}

\bibitem{sel} Z.\,Sela,
{\it Diophantine Geometry over groups: a list of research problems},\\
\texttt{http://www.ma.huji.ac.il/~zlil/}

\bibitem{sha} P.\,B.\,Shalen, {\it Linear representations of certain
amalgamated products}, J. Pure. Appl. Algebra {\bf 15} (1979), 187--197.

\bibitem{weh} B.\,A.\,F.\,Wehrfritz, {\it Generalized free products of
linear groups}, Proc. London Math. Soc. {\bf 27} (1973), 402--424. 

\bibitem{whi} A.\,Whittemore, {\it On representations of the group of
Listing's knot by subgroups of $\slt$}, Proc. Amer. Math. Soc. {\bf 40}
(1973), 378--382.

\bibitem{wlt} H.\,Wilton, {\it Residually free 3-manifolds},
Algebr. Geom. Topol. {\bf 8} (2008), 2031--2047.

\end{thebibliography}
\end{document}